\newtheorem{theorem}{Theorem}[section]
\newtheorem{proposition}[theorem]{Proposition}
\newtheorem{lemma}[theorem]{Lemma}
\newtheorem{corollary}[theorem]{Corollary}
\theoremstyle{definition}
\newtheorem{definition}[theorem]{Definition}
\newtheorem{example}[theorem]{Example}
\theoremstyle{remark}
\newtheorem{remark}[theorem]{Remark}
\definecolor{darkgreen}{cmyk}{1,0,1,.2}
\definecolor{m}{rgb}{1,0.1,1}
\newdimen\theight
\def\TeXref#1{%
             \leavevmode\vadjust{\setbox0=\hbox{{\tt
                     \quad\quad  {\small \textrm #1}}}%
             \theight=\ht0
             \advance\theight by \lineskip
             \kern -\theight \vbox to
             \theight{\rightline{\rlap{\box0}}%
             \vss}%
             }}%
\begin{document}

 \title[Sheaf theoretic approach to Lefschetz calculus]{Sheaf theoretic approach to Lefschetz calculus}
 \thanks{This research was partially supported by TEORÍA DE MORSE, TOPOLOGÍA, ANÁLISIS Y DINÁMICA - GENERACIÓN DE CONOCIMIENTO 2020 (PID2020-114474GB-I00).
}

\author[A. Majadas-Moure \and D. Mosquera
     ]{%
	Alejandro O. Majadas-Moure \and David Mosquera-Lois
}
              
\address{
		 Alejandro O. Majadas-Moure \\
		 Departamento de Matemáticas, Universidade de Santiago de Compostela, SPAIN}
		\email{alejandro.majadas@usc.es} 
   \address{ David Mosquera-Lois \\ Departamento de Matemáticas, Universidade de Vigo, SPAIN }\email{david.mosquera.lois@uvigo.es}
   \email{david.mosquera.lois@usc.es} 
		

\begin{abstract} 
We lift the Lefschetz number from an algebraic invariant of maps between spaces to an invariant of morphisms of data over the spaces. 
\end{abstract}



\maketitle


\section{Introduction}

Both in Applied and Pure Mathematics, it is becoming more popular to study not only (algebraic) invariants of spaces, but also (algebraic) invariants of data over these spaces. A very insightful instance of this approach shows up in Euler Calculus (see, for example, \cite{C-G-M,B-G} and the references therein) where the Euler integral may be seen as a lift of the classical Euler characteristic $\chi$ from an invariant of spaces to an invariant of data over spaces.

The Lefschetz number of a continuous map between topological spaces $f\colon X\to X$ is a fundamental invariant in Mathematics. It may be seen as a generalization of the Euler-Poincaré characteristic since the Lefschetz number of the identity map coincides with the Euler-Poincaré characteristic. The importance of the Lefschetz number led to the development of  Lefschetz calculus (see \cite{M-M}), that is, integration with respect to the Lefschetz number.

In this work we aim to lift the Lefschetz number from an invariant of maps between spaces to an invariant of morphisms of data structures over the spaces. In doing so, we answer positively a question raised in \cite{C-G-M}.

In order to accomplish our goal, the first step is to formalize the idea of data structures over spaces. We do so in two novel different complementary approaches and we show the strong relation between them (Theorems \ref{teor integral}, \ref{thm:axiomatization_lefschetz}). Let $X$ be a definable or tame cell complex (for example a finite simplicial complex in $\mathbb{R}^n$) and let $f\colon X\to X$ be a homeomorphism. In the first approach, which is a continuation of our previous work (\cite{M-M}), we define data over a space as a counting function $f\colon X\to \mathbb{Z}$ and the aggregation of information corresponds to integration with respect to the combinatorial Lefschetz number. The second approach encodes data over $X$ as a special kind of (constructible) sheaf $\mathcal{F}$ over $X$ and the algebraic-topological invariant obtained corresponds to computing a sheaf-theoretic Lefschetz number of $f$.

The main accomplishments of this work are the following. 

First, we define the novel notions of combinatorial Lefschetz number (see Subsection \ref{subsec_comb_lefschetz_number}) and sheaf-theoretic Lefschetz number (see Subsection \ref{subsec:sheaf_theoretic_lefschetz}) of a homeomorphism between definable cell complexes and we establish a relation between  them (Theorem \ref{iso comp comb}).

Second, we introduce the two novel approaches to lifting the Lefschetz number from maps between spaces to morphisms of data over spaces: the combinatorial one in Subsection \ref{subsec:integration_comb_lefschetz_number} and the sheaf theoretic one in Subsection \ref{subsec:sheaf_theoretic_lefschetz}. The former one has the advantage of computability and simplicity while the latter  not only provides an enlightenment of Lefschetz calculus (which we began in \cite{M-M}) but also proves invaluable in guaranteeing well-definedness of the combinatorial approach (Corollary \ref{coro:well-definedness_combinatorial_number}) and its topological invariance (Corollary \ref{coro:invariance_comb_lefschetz}). 

Third, we prove a representation theorem (Theorem \ref{teor integral}) which establishes a strong bond between aggregation of data information (integration with respect to the combinatorial Lefschetz number) and the sheaf-theoretic number.  Theorem \ref{teor integral} may be restated in very simplified terms as follows:

 \medskip
{\noindent {\rm\bf Theorem \ref{teor integral} (Representation theorem).}
Let $X$ be a nice cell complex  and let $f\colon X \to X$ be a cellular homeomorphism. If $\mathcal{F}$ is a nice (constructible) sheaf on $X$ compatible with $f\colon X \to X$, then the sheaf-theoretic Lefschetz number can be computed by aggregating data in a combinatorial way: \begin{equation}\label{eq:corresopndence_sheaves_integrals_functions_a}
        L_c(X,f,\mathcal{F})=\int_X h\mathrm{d}\varLambda f,
    \end{equation}
where $h\colon X \to \mathbb{N}$ is the nice function associated to $\mathcal{F}$.
Conversely, if $h\colon X \to \mathbb{N}$ is a nice function,   its associated nice sheaf $\mathcal{F}$ satisfies Equation (\ref{eq:corresopndence_sheaves_integrals_functions_a}).} \medskip

Moreover, we obtain Barrow's rule for integration with respect to Lefschetz number (Theorem \ref{coro:barrow}):
    \begin{equation*}
        \int_X h \mathrm{d}\varLambda f= L_c(X,f,\oplus_{j>0}\tilde{\mathbb{R}}^j_{h^{-1}(j)})-L_c(X,f,\oplus_{j<0}\tilde{\mathbb{R}}^{-j}_{h^{-1}(j)}).
    \end{equation*}

Fourth, we study to what extent the Representation theorem (Theorem \ref{teor integral}) characterizes the sheaf-theoretic Lefschetz number. Let be $\mathcal{C}$ the family of triples $(X,f,\mathcal{F})$ where $X$ is a ``nice'' cell complex, $\mathcal{F}$ is a ``nice'' (constructible) sheaf on $X$ and $f$ a cellular homeomorphism compatible with $\mathcal{F}$. We prove Theorem \ref{thm:axiomatization_lefschetz}, which may be stated in a very simplified way as follows:

\medskip
{\noindent {\rm\bf Theorem \ref{thm:axiomatization_lefschetz}.}
The sheaf-theoretic Lefschetz number is the only map between $\mathcal{C}$ and $\mathbb{Z}$ that satisfies:
    \begin{enumerate}
        \item A cofibration axiom to compute it inductively on the skeleta.  
\item The Representation theorem (Theorem \ref{teor integral}) on wedge of spheres and graphs. That is, if $X$ is a finite wedge of $n$-spheres or a graph, $\mathcal{F}$ is a nice sheaf and $f$ is a cellular homeomorphism compatible with $\mathcal{F}$, then
\begin{equation*}
    L_c(X,f,\mathcal{F})=\int_X h d\varLambda f,
\end{equation*}
where $h$ is the constructible function associated to the sheaf.
    \end{enumerate}} \medskip

Compare Theorem \ref{thm:axiomatization_lefschetz} with  \cite{A-B}, where Lefschetz number between cell complexes is characterized.

\section{Definitions of the Lefschetz number}\label{sec:def_lefschetz_number}
In this section, we introduce the combinatorial and sheaf-theoretic Lefschetz numbers. In order to do so, we first introduce the notions of definable cell complexes and $f$-$c$-constructible sheaves.

\subsection{Definable cellular structures}

We introduce the definable structures we will work with. For a more detailed account on $o$-minimal topology we refer the reader to \cite{Dries}. Recall that an \textit{$o$-minimal} structure over $\mathbb{R}$ is a collection $\mathscr{A}=\{\mathscr{A}_n\}_{n\in \mathbb{N}}$ so that the following properties hold:
\begin{enumerate}
\item $\mathscr{A}_n$ is an algebra of subsets of $\mathbb{R}^n$ for each $n\in \mathbb{N}$.
\item The family $\mathscr{A}$ is closed with respect to cartesian products and canonical projections.
\item The subset $\{(x,y)\in\mathbb{R}^2, x<y\}$ is in $\mathscr{A}_2$.
\item The family $\mathscr{A}_1$ consists of all finite unions of points and open intervals of $\mathbb{R}$.
\item Every $\mathscr{A}_n$ contains all the algebraic subsets of $\mathbb{R}^n$.
\end{enumerate}
Given an o-minimal structure, we will say that a set $A\subset \mathbb{R}^n$ is \textit{definable} if $A\in \mathscr{A}_n$.

A {\em generalized simplicial complex} is a finite collection $K $ of open simplices in $\mathbb{R}^n$, satisfying that given two open simplices in the complex, the intersection of their closures is the empty set or the closure of an open simplex in the complex. By {\em incomplete subcomplex} of a simplicial complex we mean a union of open simplices (which it is not necessarily closed). We illustrate a generalized simplicial complex in Figure \ref{fig:generalized_simplicial_complex}, where, as we will do along the paper, we will abuse of notation and denote by $X$ both the geometric realization and the underlying  simplicial complex.
\begin{figure}[h!]
    \centering
\includegraphics[width=0.25\linewidth]{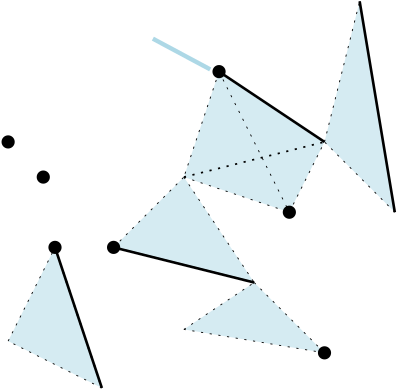}
    \caption{Generalized simplicial complex.}
\label{fig:generalized_simplicial_complex}
\end{figure}

We will use o-minimal structures that contain the semi-linear sets so that, in this way, the generalised simplicial complexes are definable. Moreover, it holds the following triangulation theorem:

\begin{theorem}[{Definable triangulation theorem \cite{Dries}}] \label{thm:triangulation} 
	Let $X\subset \mathbb{R}^n$ be a definable set and let $\{X_i\}_{i=1}^{m}$ be a finite family of definable subsets of $X$. Then there exists a definable triangulation of $X$ compatible with the collection of subsets.
\end{theorem}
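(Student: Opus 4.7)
The plan is to prove the theorem by induction on the ambient dimension $n$, using the cell decomposition theorem for o-minimal structures as the fundamental tool. The base case $n=1$ is immediate: the $o$-minimality axiom forces any definable subset of $\mathbb{R}$ to be a finite union of points and open intervals, and the endpoints of the pieces induced by $X$ and the $X_i$ give the desired triangulation directly.

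For the inductive step, I would first invoke the cell decomposition theorem to obtain a finite definable cell decomposition $\mathcal{D}$ of $\mathbb{R}^n$ compatible with $X$ and all the $X_i$, where each cell is either the graph of a continuous definable function over a cell in $\mathbb{R}^{n-1}$ or a band between two such graphs (or $\pm\infty$). After a generic definable linear change of coordinates—arranged so that the closures of the cells behave well under the projection $\pi\colon \mathbb{R}^n\to\mathbb{R}^{n-1}$—apply the inductive hypothesis to $\pi(\mathcal{D})$ to obtain a definable triangulation $K$ of $\pi(X)$ compatible with $\{\pi(C):C\in\mathcal{D}\}$.

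Next, I would lift $K$ to a triangulation of $X$ by a coning construction. For each open simplex $\tau$ of $K$ and each cell $C\in\mathcal{D}$ lying over $\tau$, choose a definable "barycenter" point in $C$ (using definable Skolem/choice, available in $o$-minimal structures after naming parameters, or by selecting the point over the barycenter of $\tau$). If $C$ is the graph of a definable continuous function $f$ over $\tau$, form the simplex with vertices at the lifts of $\tau$'s vertices together with the barycenter; interpret the triangulation in the usual $o$-minimal sense as a definable homeomorphism from the geometric realization of an abstract simplicial complex onto $X$, so that the lifted pieces only need to be definably homeomorphic to standard simplices rather than affinely embedded. For band cells, cone the triangulation of the boundary (which consists of graphs already triangulated, together with the triangulated projection) to the chosen interior point.

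The main obstacle is ensuring that the local pieces glue into a \emph{global} simplicial complex whose vertices, edges and higher faces match up consistently across adjacent cells, and that compatibility with every $X_i$ is preserved simultaneously. This is handled by first refining the cell decomposition $\mathcal{D}$ so that it is compatible with each $X_i$ (allowed by cell decomposition), and by performing the coning relative to the common triangulation $K$ of the projection, so that faces shared between cells in $\mathcal{D}$ automatically receive the same subdivision. The delicate bookkeeping at the frontier of each cell—where closures meet—is the technical heart of the argument, and is standard in the $o$-minimal literature (see Dries, Ch.~8).
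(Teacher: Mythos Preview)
The paper does not give its own proof of this statement: Theorem~\ref{thm:triangulation} is quoted from \cite{Dries} as a known result and is used as a black box throughout, so there is no proof in the paper to compare against. Your outline follows the standard argument in the cited source (induction on ambient dimension, cell decomposition, triangulate the projection by induction, then lift by coning over barycenters), which is the correct approach; the only caveat is that invoking ``definable Skolem/choice'' is unnecessary and not generally available---selecting the lift over the barycenter of each base simplex, as you also suggest, is the right move.
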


Let be $X$ a finite (therefore compact) CW complex. It can be embedded in some $\mathbb{R}^n$. Therefore, it makes sense to define a {\em definable cellular complex} $X$ as a finite CW complex such that all its open cells are definable. As a consequence, definable cellular complexes have definable skeletons in all dimensions. We define an {\em 
incomplete subcomplex} of $X$ as a union of open cells of $X$.  An incomplete subcomplex of a compact generalized simplicial complex $X$ is an example of a {\em incomplete subcomplex} of $X$. The reader may think in terms of generalized simplicial complexes when reading Section \ref{sec:sheaf-theoretic_Lefschetz_calculus}.

\subsection{The combinatorial Lefschetz number}\label{subsec_comb_lefschetz_number}
We briefly recall the definition of the combinatorial Lefschetz number for homeomorphisms between simplicial complexes. We refer the reader to \cite{M-M} for a detailed treatment. Let $f:X\rightarrow X$ be a homeomorphism of a simplicial complex to itself. Let $U\subset X$ be a definable $f$-invariant set ($f(U)= U$). Let $\overline{U}$ be its closure in $X$. We define the combinatorial Lefschetz number of $f:U\subset X\rightarrow X$ (relative to $U$) in several steps:

\begin{enumerate}
    \item We consider a triangulation $(L,\overline{K},K)$ of $X$ compatible with $(X, \overline{U}, U)$ (the existence of this triangulation guaranteed by Theorem \ref{thm:triangulation}) and a map $\tilde{f}\colon L\to L$ induced by $f$.
    \item We construct a simplicial approximation $\tilde{f}^{\text{\rm simp}}\colon L\to L$ of $\tilde{f}$ in $L$ (with respect to a possibly finer simplicial structure in $L$).
    \item Let $C_*(\tilde{f}^{\text{\rm simp}}):C_*(L)\rightarrow C_*(L)$ be the induced simplicial chain map. In each dimension $p$, we will say that an \textit{incomplete basis of $C_*(U)$} consists of the $p$-simplices $\sigma^p$ such that the open simplex corresponding to $\sigma^p$ is an open simplex of $U$. Thus, we can restrict the matrix of the map $C(\tilde{f}^{\text{\rm simp}})$ to the square submatrix of coefficients $(i,j)$ such that $\sigma_i^p$ and $\sigma_j^p$ belong to the incomplete basis of $C_*(U)$. Then, $\varLambda^c(C(\tilde{f}^{\text{\rm simp}}),C_*(U))$ is defined as the alternating sum of the traces of these submatrices in the different dimensions of the complex $X$.
    \item Finally, we set $\varLambda_c (U,f)_X=\varLambda^c(C(\tilde{f}^{\text{\rm simp}}),C_*(U)).$
\end{enumerate}

We extend the definition to cellular homeomorphisms. Let $X$ be a definable finite cellular complex. Let $f:X\rightarrow X$ be a cellular homeomorphism and consider an incomplete subcomplex $U\subset X$ that is $f$-invariant and locally compact. Then, by Theorem \ref{thm:triangulation}  there exists a definable triangulation of $X$ compatible with $U$, call it $(Y,K,g)$ (it does exist even if $U$ is not locally compact). We define the Lefschetz combinatorial number of $f$ in $U$ as
    \begin{equation*}
\varLambda_c(U,f)_X=\varLambda_c(K,g)_Y.
    \end{equation*}
For simplicity of the notation, we will denote $\varLambda_c(U,f)$ instead of $\varLambda_c(U,f)_X$.
We will prove later that the combinatorial Lefschetz number of $f$ in $U$ is well-defined (Corollary \ref{coro:well-definedness_combinatorial_number}). 

The following lemma follows from the additivity of the combinatorial Lefschetz number  for generalized simplicial complexes (see \cite{M-M}) and the definable triangulation theorem (Theorem \ref{thm:triangulation}):

\begin{lemma}[Additivity of the combinatorial Lefschetz number]\label{lemma:additivity_combinatorial_lefschetz_number}
    Let $X$ be a definable cellular complex and let $U$ and $V$ be two disjoint definable and $f$-invariant subsets. Then, $\varLambda_c( U\cup V,f)_X=\varLambda_c(U,f)_X+\varLambda_c(V,f)_X$.
\end{lemma}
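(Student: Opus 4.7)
The plan is to reduce the cellular statement to the already established simplicial version by producing a single definable triangulation that is compatible with both subsets simultaneously. Since $U$ and $V$ are disjoint, definable, and $f$-invariant, their union $U\cup V$ is also definable and $f$-invariant. I would first apply the Definable triangulation theorem (Theorem \ref{thm:triangulation}) to the definable set $X$ and the finite family $\{U,V\}$ to obtain a definable triangulation $(Y,K,g)$ of $X$ that is compatible with both $U$ and $V$; compatibility with $U\cup V$ is then automatic, since each open simplex of $K$ that meets $U\cup V$ is entirely contained either in $U$ or in $V$.

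Let $K_U$ and $K_V$ denote the incomplete subcomplexes of $K$ corresponding to $U$ and $V$ respectively. Because $U\cap V=\emptyset$, the collection of open simplices lying in $U\cup V$ is precisely the disjoint union $K_U\sqcup K_V$, and both $K_U$ and $K_V$ are $g$-invariant (as $g$ is the triangulation of the $f$-invariant decomposition). By the very definition of the combinatorial Lefschetz number of a cellular homeomorphism, evaluated via this chosen triangulation, one has
\begin{equation*}
\varLambda_c(U,f)_X=\varLambda_c(K_U,g)_Y,\quad \varLambda_c(V,f)_X=\varLambda_c(K_V,g)_Y,\quad \varLambda_c(U\cup V,f)_X=\varLambda_c(K_U\cup K_V,g)_Y.
\end{equation*}
Now I would invoke the already-proven additivity statement for generalized simplicial complexes from \cite{M-M}, which gives
\begin{equation*}
\varLambda_c(K_U\cup K_V,g)_Y=\varLambda_c(K_U,g)_Y+\varLambda_c(K_V,g)_Y.
\end{equation*}
Stringing the three identities above together yields the claimed equality.

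The only subtle point (which is really the content of this reduction) is that all three numbers must be computed with respect to the same triangulation; this is what forces the appeal to Theorem \ref{thm:triangulation} with the finite family $\{U,V\}$, rather than applying the definition three times with potentially unrelated triangulations. Note that this argument deliberately sidesteps the question of well-definedness of $\varLambda_c(\cdot,f)_X$ under change of triangulation, which is only settled later in Corollary \ref{coro:well-definedness_combinatorial_number}: since a single common triangulation is used here, the proof is independent of that result and in fact furnishes additivity as a lemma available for subsequent use.
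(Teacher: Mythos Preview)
Your proposal is correct and follows exactly the approach indicated in the paper, which simply states that the lemma ``follows from the additivity of the combinatorial Lefschetz number for generalized simplicial complexes (see \cite{M-M}) and the definable triangulation theorem (Theorem~\ref{thm:triangulation}).'' Your write-up merely spells out this reduction in detail, including the helpful observation that using a single common triangulation makes the argument independent of Corollary~\ref{coro:well-definedness_combinatorial_number}.
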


\subsection{$c$-constructible sheafs}
Let be $X$ a definable cellular complex. We say that a sheaf $\mathcal{F}$ of finite dimensional real vector spaces on $X$ is a \textit{$c$-constructible sheaf} if: \begin{itemize}
    \item there exists a partition $\{X_i\}$ of definable locally compact incomplete subcomplexes of $X$ such that, for all $i$, $\mathcal{F}_{X_i}$ is constant on each connected component (this is the case for example of a constructible sheaf where the connected components of each $X_i$ where the sheaf is locally constant are simply-connected).
\end{itemize} If $f:X\rightarrow X$ is a homeomorphism we say that the $c$-constructible sheaf $\mathcal{F}$ is \textit{$f$-constructible} if the $X_i$ are $f$-invariant.


To make this work more self-contained and readable by a wider audience, we recall some results concerning locally constant sheaves (see \cite{C-G-M, Dimca, Hainaut, Iversen, Tennison}). 

\begin{lemma}\label{lemas previos}
    \begin{enumerate}
        \item[(A)]\label{inv cte} If $\mathcal{F}$ is a constant sheaf on $X$ and $f:X\rightarrow X$ is a map, then $f^{\ast}(\mathcal{F})=\mathcal{F}$.
        \item[(B)]\label{loc cte en conexo}
    If $X$ is a simply-connected space and $\mathcal{F}$ is a locally constant sheaf, then $\mathcal{F}$ is the constant sheaf. 
        \item[(C)]\label{hom inducido} Let $f:X\rightarrow Y$ be a proper map and let $\mathcal{F}$ be a sheaf on $Y$. There exists a homomorphism $f^{\ast}:H_c^{\ast}(Y,\mathcal{F})\rightarrow H_c^{\ast}(X, f^{\ast}\mathcal{F})$ induced by $f$ in cohomology with compact supports.
        \item[(D)] \label{coho cte}
    Let be $A$ a definable subset of $X$ and $\tilde{\mathbb{{R}}}_{A}$ the constant sheaf on $A$. Then $H_c^{\ast}(A,\tilde{\mathbb{{R}}}_{A})\cong H_c^{\ast}(A,\mathbb{R})$. 
    \end{enumerate}
\end{lemma}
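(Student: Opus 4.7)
All four parts are standard facts in sheaf theory; my plan is to invoke the appropriate classical machinery for each, being careful that the definable/tame hypotheses of the paper line up with what the classical statements require.

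For (A), the idea is to observe that the constant sheaf on $X$ with stalk $V$ is exactly $p_X^{\ast}V$, where $p_X\colon X\to \{\ast\}$ is the unique map to a point. Functoriality of sheaf pullback then gives $f^{\ast}(p_X^{\ast}V) = (p_X\circ f)^{\ast}V = p_X^{\ast}V$, since $p_X\circ f = p_X$. For (B), I would apply the classical monodromy correspondence: on a locally path-connected space, locally constant sheaves are equivalent to representations of the fundamental groupoid $\Pi_1(X)$. When $X$ is simply connected, $\Pi_1(X)$ is equivalent to the trivial groupoid, so every representation is constant, and hence so is $\mathcal{F}$. An equivalent direct argument fixes a basepoint $x_0\in X$ and builds an isomorphism $\mathcal{F}\cong p_X^{\ast}\mathcal{F}_{x_0}$ by parallel transport along paths, using simple connectivity to ensure path-independence.

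For (C), I would use the Godement (flabby) resolution $\mathcal{F}\to I^{\bullet}$ on $Y$. Since $f$ is proper, preimages of compact sets are compact, so pulling back a section supported in a compact $K\subset Y$ yields a section of $f^{\ast}\mathcal{F}$ supported in the compact set $f^{-1}(K)\subset X$. This produces a cochain map $\Gamma_c(Y, I^{\bullet})\to \Gamma_c(X, f^{\ast}I^{\bullet})$, and since $f^{\ast}I^{\bullet}$ remains a $\Gamma_c$-acyclic resolution of $f^{\ast}\mathcal{F}$, passing to cohomology yields the desired $f^{\ast}\colon H_c^{\ast}(Y,\mathcal{F})\to H_c^{\ast}(X, f^{\ast}\mathcal{F})$. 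For (D), I would invoke the classical comparison between sheaf cohomology with constant coefficients and singular cohomology with compact supports on locally contractible spaces. Since $A$ is definable, Theorem \ref{thm:triangulation} produces a triangulation of $A$, so $A$ is locally contractible; the comparison then follows either by resolving $\tilde{\mathbb{R}}_A$ via the soft sheaves of singular cochains and verifying stalk-wise quasi-isomorphism, or equivalently via \v{C}ech cohomology of a good cover and the Leray comparison spectral sequence.

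The main obstacle here is bookkeeping rather than depth: one must check that the definable/tame setting actually supports the standard classical versions of these results. In particular for (D) one needs definability to guarantee that $A$ admits a good enough structure (triangulability, local contractibility) for sheaf-singular comparison with compact supports to apply, and for (B) one must confirm that the notion of locally constant used fits into the usual monodromy framework. Both reductions amount to citations from the foundational references \cite{Dimca, Iversen, Tennison}.
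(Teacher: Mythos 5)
The paper does not actually prove this lemma: it is presented as a collection of recalled standard facts, with the ``proof'' consisting of the citations to \cite{C-G-M, Dimca, Hainaut, Iversen, Tennison}. Your proposal supplies the standard arguments behind those citations, and each of your four reductions is the right one: (A) via $\mathcal{F}=p_X^{\ast}V$ and functoriality (this is literally the argument the paper itself uses later in the proof of Lemma \ref{lema tocho inv}); (B) via the monodromy equivalence, where you correctly flag that one needs the local niceness (local path-connectedness, semi-local simple connectivity) that CW/definable complexes provide, since the statement is false for arbitrary simply connected spaces; (D) via triangulability and local contractibility of definable sets. The one place where your sketch overstates is in (C): the pullback $f^{\ast}I^{\bullet}$ of a flabby or soft resolution is in general neither flabby nor soft, so you cannot simply assert it is $\Gamma_c$-acyclic. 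The standard repair is to choose a second resolution $f^{\ast}\mathcal{F}\to J^{\bullet}$ by injective (or soft) sheaves on $X$, lift the identity to a map $f^{\ast}I^{\bullet}\to J^{\bullet}$ over $f^{\ast}\mathcal{F}$, and compose $\Gamma_c(Y,I^{\bullet})\to\Gamma_c(X,f^{\ast}I^{\bullet})\to\Gamma_c(X,J^{\bullet})$, where the first arrow is exactly your properness argument ($\operatorname{supp}(f^{\ast}s)\subset f^{-1}(\operatorname{supp}s)$ is compact). With that adjustment your write-up is a complete and correct substitute for the citations, and arguably more useful to the reader than what the paper provides.
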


We are interested in applying Lemma \ref{hom inducido} (C) to the case of a homeomorphism $f\colon X\to X$ and a $f$-$c$-constructible sheaf $\mathcal{F}$. However, when working with locally constant sheaves which are not constant, even if the map $f$ respects the locally constant components, in general it is not true that $f^{\ast}\mathcal{F}$ equals $\mathcal{F}$. We circumvent this problem by defining an {\em associated $f$-$c$-constructible sheaf }$\mathcal{F}'$ to $\mathcal{F}$. We define
    \begin{equation*}
        \mathcal{F}'=(i_1)_!i_1^\ast\mathcal{F}\oplus (i_2)_!i_2^\ast\mathcal{F}\oplus\ldots\oplus (i_k)_!i_k^\ast\mathcal{F},
    \end{equation*}
where $\{X_1, X_2,\ldots X_k\}$ are the locally compact incomplete subcomplexes of the partition of $X$ where the sheaf is locally constant and the $i_j\colon X_j\to X$ are the inclusion maps.

Note that $\mathcal{F}'$ is automatically a $f$-$c$-constructible sheaf with the same definable partition as the original $f$-$c$-constructible sheaf.

\subsection{The associated $f$-$c$-constructible sheaf $\mathcal{F}'$}
The goal of this subsection is to prove that under certain conditions, there is a natural isomorphism between $f^\ast(\mathcal{F}')$ and $\mathcal{F}'$. 
\begin{lemma}\label{primer lema inv}
    Let $A$ be a topological space which equals the disjoint union of two open subsets $W_1$ and $W_2$. Let $\mathcal{F}$ be a sheaf on $A$. Then there exists a canonical isomorphism between $\mathcal{F}$ and $(i_1)_!i_1^\ast\mathcal{F}\oplus (i_2)_!i_2^\ast\mathcal{F}$, where $i_1$ and $i_2$ are the inclusions.
\end{lemma}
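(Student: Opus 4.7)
The plan is to exploit the fact that since $W_1\sqcup W_2=A$ with both $W_j$ open, each $W_j$ is automatically clopen in $A$. Hence both inclusions $i_1,i_2$ are simultaneously open and closed embeddings. This will make the functors $(i_j)_!\, i_j^\ast$ very transparent: they are the identity on stalks over $W_j$ and zero outside $W_j$.

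First I would define a canonical morphism $\varphi\colon (i_1)_!i_1^\ast\mathcal{F}\oplus (i_2)_!i_2^\ast\mathcal{F}\to \mathcal{F}$ by taking the direct sum of the two counits of the adjunction $(i_j)_!\dashv i_j^\ast$. Since each counit is natural in $\mathcal{F}$, so is $\varphi$, which will guarantee that the isomorphism produced is canonical (and hence compatible with any additional structure that commutes with pullback, which is what makes this lemma useful for the $f$-invariance questions of the next subsection).

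The main step is then to verify that $\varphi$ is an isomorphism, and the cleanest way is stalkwise. Fix $x\in A$ and assume without loss of generality that $x\in W_1$. Then $((i_1)_!\,i_1^\ast\mathcal{F})_x\cong (i_1^\ast\mathcal{F})_x=\mathcal{F}_x$ because $i_1$ is an open embedding and $x$ lies in its image, whereas $((i_2)_!\,i_2^\ast\mathcal{F})_x=0$ because $x\notin W_2$ and $(i_2)_!$ extends by zero. Thus $\varphi_x\colon \mathcal{F}_x\oplus 0 \to \mathcal{F}_x$ is the identity. Alternatively, on sections one can argue via the gluing axiom: for any open $V\subseteq A$, the decomposition $V=(V\cap W_1)\sqcup(V\cap W_2)$ is into two clopen pieces of $V$, and so $\mathcal{F}(V)\cong \mathcal{F}(V\cap W_1)\oplus \mathcal{F}(V\cap W_2)$, which agrees with $\varphi(V)$.

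The only technical point one has to be careful with is the identification $(i_j)_!\,i_j^\ast\mathcal{F}(V)=\mathcal{F}(V\cap W_j)$, since in general the sections of $i_!\mathcal{G}$ over $V$ are only those sections of $\mathcal{G}$ on $V\cap U$ whose support is closed in $V$. Clopenness of $W_j$ makes this support condition vacuous, so the computation goes through. I do not expect any genuine obstacle beyond this bookkeeping, as the lemma is essentially the statement that the category of sheaves on a disjoint union of two spaces factors as the product of the categories of sheaves on each piece.
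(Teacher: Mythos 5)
Your proposal is correct and follows essentially the same route as the paper: both reduce to the observation that each $W_j$ is clopen, so $(i_j)_!i_j^\ast\mathcal{F}$ has sections $\mathcal{F}(V\cap W_j)$ over any open $V$ (the support condition in the definition of $(i_j)_!$ being vacuous), and the gluing axiom then gives $\mathcal{F}(V)\cong\mathcal{F}(V\cap W_1)\oplus\mathcal{F}(V\cap W_2)$. If anything, your version is slightly more complete, since you exhibit the canonical morphism explicitly as the sum of the counits of the adjunctions $(i_j)_!\dashv i_j^\ast$ and verify it stalkwise, whereas the paper only identifies the section groups without writing down the map realizing the isomorphism.
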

\begin{proof}
    As $W_1$ and $W_2$ are also closed, we have $(i_1)_!=(i_1)_\ast$ and $(i_2)_!=(i_2)_\ast$. On the other hand, as they are open, we have $i_1^\ast\mathcal{F}=\mathcal{F}_{|W_1}$ and $i_2^\ast\mathcal{F}=\mathcal{F}_{|W_2}$. In consequence, if we take $V$ an open set of $A$ we have
    \begin{equation*}
        \bigl((i_1)_!i_1^\ast\mathcal{F}\bigr)(V)=\bigl((i_1)_\ast\mathcal{F}_{|W_1}\bigr)(V)=\mathcal{F}_{|W_1}(V\cap W_1)=\mathcal{F}(V\cap W_1).
    \end{equation*}
    The same result is obtained for $i_2$.
    Finally,
    \begin{equation*}
        \bigl((i_1)_!i_1^\ast\mathcal{F}\oplus (i_2)_!i_2^\ast\mathcal{F}\bigr)(V)=\mathcal{F}(V\cap W_1)\oplus \mathcal{F}(V\cap W_2)\cong\mathcal{F}(V). \qedhere
    \end{equation*}
\end{proof}
As a consequence of this result we obtain the following corollary.
\begin{corollary}\label{cor inv}
    Let $X$ be a topological space, A a locally closed subspace of $X$ that is the disjoint union of two open subsets $W_1$ and $W_2$ of $A$. Let $\mathcal{F}$ be a sheaf on $X$ and $i_1:W_1\rightarrow A$, $i_2:W_2\rightarrow A$, $j:A\rightarrow X$, $j_1:W_1\rightarrow X$ $j_2:W_2\rightarrow X$ the inclusions. Then there is a natural isomorphism
    \begin{equation*}
        j_!j^\ast\mathcal{F}\cong (j_1)_!j_1^\ast\mathcal{F}\oplus(j_2)_!j_2^\ast \mathcal{F}
    \end{equation*}
\end{corollary}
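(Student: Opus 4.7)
The plan is to bootstrap Corollary \ref{cor inv} from Lemma \ref{primer lema inv} by pulling back to $A$, applying the lemma there, and then pushing the result forward along $j$. Concretely, I would first consider the sheaf $j^\ast\mathcal{F}$ on $A$. Since $A = W_1 \sqcup W_2$ as a topological space with $W_1$ and $W_2$ open in $A$, Lemma \ref{primer lema inv} applied to $j^\ast\mathcal{F}$ yields a canonical isomorphism
\begin{equation*}
    j^\ast\mathcal{F} \;\cong\; (i_1)_! i_1^\ast (j^\ast\mathcal{F}) \oplus (i_2)_! i_2^\ast (j^\ast\mathcal{F}).
\end{equation*}

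Next I would apply the functor $j_!$ (extension by zero along the locally closed inclusion $j\colon A \to X$) to both sides. Two standard facts do the work here: first, $j_!$ is exact and commutes with finite direct sums of sheaves (this can be checked stalkwise, since $(j_! \mathcal{G})_x = \mathcal{G}_x$ for $x \in A$ and $0$ otherwise); second, one has functoriality of extension by zero and of pullback, so $j_! \circ (i_k)_! = (j \circ i_k)_! = (j_k)_!$ and $i_k^\ast \circ j^\ast = (j \circ i_k)^\ast = j_k^\ast$ for $k = 1, 2$. Combining these gives
\begin{equation*}
    j_! j^\ast \mathcal{F} \;\cong\; j_!\bigl((i_1)_! i_1^\ast j^\ast \mathcal{F}\bigr) \oplus j_!\bigl((i_2)_! i_2^\ast j^\ast \mathcal{F}\bigr) \;\cong\; (j_1)_! j_1^\ast \mathcal{F} \oplus (j_2)_! j_2^\ast \mathcal{F},
\end{equation*}
which is the required natural isomorphism. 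Naturality follows from the naturality of the isomorphism in Lemma \ref{primer lema inv} together with the functoriality of the operations involved.

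The only mildly subtle point is justifying that the inclusions $i_k\colon W_k \to A$ are themselves locally closed (in fact, open) so that $(i_k)_!$ is defined and behaves as expected, and that the composed inclusions $j_k = j \circ i_k$ are locally closed inclusions into $X$ so that the formula $j_! \circ (i_k)_! = (j_k)_!$ is the standard functoriality for extension by zero. Since $W_1$ and $W_2$ are open in $A$ and $A$ is locally closed in $X$, both of these are immediate, so I do not expect any real obstacle here—the proof is essentially a two-line calculation once Lemma \ref{primer lema inv} is in hand.
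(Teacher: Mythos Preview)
Your proof is correct and follows essentially the same route as the paper: apply Lemma~\ref{primer lema inv} to $j^\ast\mathcal{F}$ on $A$, then push forward along $j_!$ using that $j_!$ preserves direct sums and that $j_!\circ(i_k)_! \cong (j_k)_!$. The only cosmetic difference is that the paper justifies the identity $j_!\circ(i_k)_! \cong (j_k)_!$ by invoking the uniqueness of extension by zero, whereas you phrase it as functoriality of $(\cdot)_!$ along composites of locally closed inclusions.
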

\begin{proof}
By Lemma~\ref{primer lema inv}, we have
\begin{equation*}
    j^\ast\mathcal{F}\cong(i_1)_!i_1^\ast j^\ast\mathcal{F} \oplus (i_2)_!i_2^\ast j^\ast\mathcal{F}= (i_1)_!j_1^\ast\mathcal{F}\oplus (i_2)_! j_2^\ast \mathcal{F}
\end{equation*}
and then
\begin{equation*}
    j_!j^\ast\mathcal{F}\cong j_!(i_1)_!j_1^\ast\mathcal{F}\oplus j_!(i_2)_! j_2^\ast \mathcal{F}\cong (j_1)_!j_1^\ast\mathcal{F}\oplus(j_2)_!j_2^\ast \mathcal{F},
\end{equation*}
where the last isomorphism comes from the uniqueness of the extension by zero (see \cite{Tennison}[8.6, p. 63]).
\end{proof}

\begin{lemma}\label{lema tocho inv}
    Let $X$ be a topological space, let $g\colon X\to X$ be a continuous map, and let $A\subset X$ be a locally closed subspace such that $g(A)\subset A$. If $\mathcal{G}$ is a constant sheaf on $A$, then we have
    \begin{equation*}
        g^\ast i_!\mathcal{G}\cong i_!\mathcal{G}.
    \end{equation*}
\end{lemma}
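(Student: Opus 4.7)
The plan is to combine the constancy of $\mathcal{G}$ with a base change argument coming from the commutative square induced by the restriction. Since $g(A) \subset A$, the restriction $g_A \defeq g|_A \colon A \to A$ is well-defined and continuous, fitting into
\[
\begin{array}{ccc} A & \stackrel{g_A}{\longrightarrow} & A \\ i \downarrow & & \downarrow i \\ X & \stackrel{g}{\longrightarrow} & X. \end{array}
\]

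First, I would apply Lemma~\ref{lemas previos}(A) to $g_A$ to obtain a canonical isomorphism $g_A^* \mathcal{G} \cong \mathcal{G}$, valid because constant sheaves pull back to themselves along any continuous self-map. Second, the square above yields the natural base change morphism $g^* i_! \mathcal{G} \to i_! g_A^* \mathcal{G}$; composing with the identification from the first step produces the desired canonical map $g^* i_! \mathcal{G} \to i_! \mathcal{G}$.

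To confirm this is an isomorphism, I would pass to stalks. Using the standard formulas $(i_! \mathcal{H})_x = \mathcal{H}_x$ for $x \in A$ and $(i_! \mathcal{H})_x = 0$ otherwise, together with $(g^* \mathcal{F})_x = \mathcal{F}_{g(x)}$, both sides agree at every point: at $x \in A$ they equal the fiber $V$ of $\mathcal{G}$, and outside $A$ they both vanish. The main obstacle is precisely the vanishing outside $A$: the stalk $(g^* i_! \mathcal{G})_x$ vanishes only when $g(x) \notin A$, which forces $g^{-1}(A) = A$ (equivalently, the square above must be cartesian for the base change to be an isomorphism of sheaves, not merely a morphism). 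This condition is satisfied in the paper's intended setting, where $g$ is a cellular homeomorphism with $g(A) = A$ coming from the $f$-$c$-constructible partitions of the previous subsection, so the lemma applies in all cases where it is needed.
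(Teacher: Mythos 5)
Your proof follows essentially the same route as the paper's: identify $g|_A^{\ast}\mathcal{G}\cong\mathcal{G}$ from constancy, then invoke base change for extension by zero along the locally closed inclusion $i$. The one substantive difference is that you check the base change morphism on stalks and thereby notice that it is an isomorphism only when the square is cartesian, i.e.\ $g^{-1}(A)=A$; the paper instead cites the reference to Iversen at this step, whose hypothesis is precisely that cartesianness. Your observation is correct and in fact exposes that the lemma as literally stated is false: with only $g(A)\subset A$, a point $x\notin A$ with $g(x)\in A$ gives $(g^{\ast}i_!\mathcal{G})_x\cong V\neq 0=(i_!\mathcal{G})_x$ (take $X$ a two-point discrete space, $A$ one of the points, $\mathcal{G}$ nonzero, and $g$ the constant map onto that point). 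As you note, the condition $g^{-1}(A)=A$ does hold in the only place the lemma is applied (Theorem~\ref{inv f'}, where $f$ is a homeomorphism and the pieces are $f$-invariant), so the downstream results are unaffected; but your version of the argument is the more careful one, and the lemma's hypothesis should be strengthened to $g^{-1}(A)=A$.
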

\begin{proof}
    Let $p:A\rightarrow \{\ast\}$ be the constant map, $V$ the vector space given by the constant sheaf and $\mathcal{V}$ the constant sheaf on $\{\ast\}$ whose sections on the point equal $V$. We have $\mathcal{G}=p^\ast\mathcal{V}$. Now, as the diagram 
    \[\begin{tikzcd}
	A && A \\
	& {\{\ast\}}
	\arrow["{g_{|A}}", from=1-1, to=1-3]
	\arrow["p"', from=1-1, to=2-2]
	\arrow["p", from=1-3, to=2-2]
\end{tikzcd}\]
commutes, we have $g_{|A}^\ast\mathcal{G}=\mathcal{G}$. Let us consider now the commutative diagram
\[\begin{tikzcd}
	A & A \\
	X & X
	\arrow["{g_{|A}}", from=1-1, to=1-2]
	\arrow["i"', from=1-1, to=2-1]
	\arrow["i"', from=1-2, to=2-2]
	\arrow["g", from=2-1, to=2-2]
\end{tikzcd}\]
As $A$ is locally closed in $X$ we have, by \cite{Iversen}[Chapter II, 6.13]
\begin{equation*}
    g^\ast i_!\mathcal{G}\cong i_! g_{|A}^\ast\mathcal{G},
\end{equation*}
and so
    \begin{equation*}
        g^\ast i_!\mathcal{G}\cong i_!\mathcal{G}. \qedhere
    \end{equation*}
\end{proof}

\begin{theorem}\label{inv f'}
    Let $X$ be a definable cellular complex and let $f\colon X\to X$ be a cellular homeomorphism. Let  $\mathcal{F}$ be a $f$-$c$-constructible sheaf which respects the constant components of the sheaf. Then there is a natural isomorphism between $f^\ast(\mathcal{F}')$ and $\mathcal{F}'$.
\end{theorem}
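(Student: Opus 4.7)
The plan is to reduce the statement to a finite iteration of Corollary~\ref{cor inv} followed by a summand-wise application of Lemma~\ref{lema tocho inv}. Unwinding the definition,
\[
\mathcal{F}' = \bigoplus_{j=1}^{k} (i_j)_! \, i_j^* \mathcal{F},
\]
where each $X_j$ is a locally compact incomplete subcomplex of the given partition and, by definition of $c$-constructibility, $i_j^*\mathcal{F}$ is constant on each connected component of $X_j$.

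First, I would refine this decomposition by splitting every $X_j$ into its finitely many connected components $\{C_{j,\alpha}\}_\alpha$ (finiteness being guaranteed by $X$ being a finite cell complex). Each $C_{j,\alpha}$ is simultaneously open and closed in $X_j$, and $X_j$ is locally closed in $X$, so iterating Corollary~\ref{cor inv} a finite number of times yields a natural isomorphism
\[
\mathcal{F}' \;\cong\; \bigoplus_{j,\alpha} (\iota_{j,\alpha})_! \, \iota_{j,\alpha}^* \mathcal{F},
\]
where $\iota_{j,\alpha}\colon C_{j,\alpha}\hookrightarrow X$ is the inclusion. By hypothesis, each $\iota_{j,\alpha}^*\mathcal{F}$ is the constant sheaf on the (connected) subspace $C_{j,\alpha}$.

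Second, using that $f^*$ commutes with finite direct sums together with the hypothesis that $\mathcal{F}$ respects the constant components (which I read as $f(C_{j,\alpha})\subset C_{j,\alpha}$ for every pair $(j,\alpha)$), I would apply Lemma~\ref{lema tocho inv} summand by summand: each $C_{j,\alpha}$ is locally closed in $X$ (being open in the locally closed $X_j$) and $\iota_{j,\alpha}^*\mathcal{F}$ is constant, so
\[
f^* (\iota_{j,\alpha})_! \, \iota_{j,\alpha}^* \mathcal{F} \;\cong\; (\iota_{j,\alpha})_! \, \iota_{j,\alpha}^* \mathcal{F}.
\]
Reassembling the summands produces the desired natural isomorphism $f^*(\mathcal{F}')\cong\mathcal{F}'$.

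I expect the main obstacle to be bookkeeping rather than any deep technical obstruction. Specifically, one must check that the decomposition into connected components is natural in $\mathcal{F}$, so that the final isomorphism is natural, and that the phrase ``respects the constant components'' is strong enough to guarantee setwise $f$-invariance of each $C_{j,\alpha}$; if instead $f$ merely permuted components carrying the same constant value, an additional index-matching argument would be needed to identify the summands before invoking Lemma~\ref{lema tocho inv}. Once these points are pinned down, the proof is a finite combination of Corollary~\ref{cor inv} and Lemma~\ref{lema tocho inv}.
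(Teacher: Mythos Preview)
Your proposal is correct and follows essentially the same route as the paper's proof: split each $X_j$ into its finitely many connected components via Corollary~\ref{cor inv}, observe the restricted sheaf is constant on each, and then apply Lemma~\ref{lema tocho inv} summand by summand using that $f$ preserves these components. The only cosmetic difference is that the paper first reduces to a single summand $(i_j)_!i_j^*\mathcal{F}$ before decomposing into components, whereas you decompose everything at once; your caveat about the precise meaning of ``respects the constant components'' is well taken and is handled in the paper exactly as you anticipate, by simply asserting $f$-invariance of each component.
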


\begin{proof}
    We want to prove that 
    \begin{equation*}
        f^\ast \bigl((i_1)_!i_1^\ast\mathcal{F}\oplus (i_2)_!i_2^\ast\mathcal{F}\oplus\ldots\oplus (i_k)_!i_k^\ast\mathcal{F}\bigr)\cong (i_1)_!i_1^\ast\mathcal{F}\oplus (i_2)_!i_2^\ast\mathcal{F}\oplus\ldots\oplus (i_k)_!i_k^\ast\mathcal{F}.
    \end{equation*}
    As the inverse image preserves the direct sum, it suffices to see that $f^\ast\bigl((i_j)_!i_j^\ast\mathcal{F}\bigr)$ is isomorphic to $ (i_j)_!i_j^\ast\mathcal{F}$ for all $j=1,\ldots, k$.

By Lemma~\ref{loc cte en conexo} we have that these sheaves are constant on each connected component of the subcomplexes $X_j$. Since  these components are finite, then they are open and we can apply Corollary~\ref{cor inv} to separate each sheaf $ (i_j)_!i_j^\ast\mathcal{F}$ into its constant components and, as they are $f$-invariant ($f$ respects these constant components), we can apply Lemma~\ref{lema tocho inv} to finish the proof.
\end{proof}

\subsection{The sheaf-theoretic Lefschetz number}\label{subsec:sheaf_theoretic_lefschetz}

Let be $X$ a definable cellular complex, $\mathcal{F}$ a $c$-constructible sheaf on $X$ and $f$ a cellular homeomorphism that respects the constant components of $\mathcal{F}$ and the partition that we consider to define the associated sheaf. We define the \textit{sheaf-theoretic Lefschetz number of} $f$ 
\textit{asociated to} $\mathcal{F}$ as
    \begin{equation*}
        L_c (X,f,\mathcal{F})=\sum_n (-1)^n \mathrm{tr}(f^{\ast},H^n_c(X,\mathcal{F}')).
    \end{equation*}
We will set the notation $L_c (X,f)=L_c (X,f,\tilde{\mathbb{R}})$.

Note that the way of assigning $\mathcal{F}'$ to $\mathcal{F}$ could be different if we considered a different partition. Theorem~\ref{teor integral} will show that the Lefschetz number is independent of this choice and therefore is well-defined. As a consequence of this, in the rest of the paper we will denote by $\mathcal{F}$ both $\mathcal{F}$ and $\mathcal{F}'$. 

\begin{remark}
    We will see later that the sum $\sum_n (-1)^n \mathrm{tr}(f^{\ast},H^n_c(X,\mathcal{F}'))$ converges since  $H^n_c(X,\mathcal{F}')$ vanishes for a sufficiently large $n$.
\end{remark}

\section{Well-definedness and topological invariance of the combinatorial Lefschetz number} \label{sec:sheaf-theoretic_Lefschetz_calculus}
The purpose of this section is to prove that the combinatorial Lefschetz number in the setting of cellular complexes and incomplete subcomplexes is well-defined. Moreover, we also obtain a topological invariance result as well as other results that will be of interest in the future.

We begin by checking that the sheaf-theoretic Lefschetz number satisfies a certain additivity rule. 

\begin{proposition}[Dimensional additivity of sheaf-theoretic Lefschetz number]\label{adit sop comp}
    Let $X$ be a cellular complex and $D$ a locally compact subcomplex. Consider a cellular homeomorphism $f:X\rightarrow X$ such that $D$ is $f$-invariant (this is $f(D)=D$). Then, the sheaf-theoretic Lefschetz number equals the sum of the Lefschetz numbers on each dimension:
    \begin{equation*}
    \mathrm{L}_c(D,f)=\sum_i \mathrm{L}_c(D^{(i)}-D^{(i-1)},f)
    \end{equation*}
\end{proposition}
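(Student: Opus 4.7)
The plan is to reduce the statement to a repeated application of the long exact sequence in compactly supported cohomology for a closed subspace and its open complement, combined with the elementary fact that the alternating sum of traces is additive on equivariant long exact sequences of finite-dimensional vector spaces.

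First, I would fix some $i$ and look at the pair $(D^{(i-1)}, D^{(i)})$. Because $D$ is a cellular subcomplex, $D^{(i-1)}$ is closed in $D^{(i)}$ and its complement $D^{(i)}\setminus D^{(i-1)}$ (the disjoint union of open $i$-cells of $D$) is open. Since $f$ is cellular and preserves $D$, it preserves each skeleton $D^{(i)}$ and therefore the decomposition $D^{(i)} = D^{(i-1)}\sqcup(D^{(i)}\setminus D^{(i-1)})$. Applying compactly supported cohomology with the constant sheaf $\tilde{\mathbb{R}}$ (via the associated $f$-$c$-constructible sheaf $\tilde{\mathbb{R}}'$ of the previous subsection) to this closed/open pair yields a long exact sequence
\begin{equation*}
\cdots \to H^n_c\bigl(D^{(i)}\setminus D^{(i-1)}\bigr) \to H^n_c\bigl(D^{(i)}\bigr) \to H^n_c\bigl(D^{(i-1)}\bigr) \to H^{n+1}_c\bigl(D^{(i)}\setminus D^{(i-1)}\bigr)\to\cdots
\end{equation*}
whose arrows commute with the induced action of $f^{\ast}$ by naturality (here Theorem~\ref{inv f'} guarantees that the action is well-defined on each term).

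Second, once this equivariant long exact sequence is in place, the standard trace-additivity lemma for a long exact sequence of finite-dimensional vector spaces endowed with commuting endomorphisms gives
\begin{equation*}
L_c\bigl(D^{(i)},f\bigr) \;=\; L_c\bigl(D^{(i-1)},f\bigr) \;+\; L_c\bigl(D^{(i)}\setminus D^{(i-1)},f\bigr).
\end{equation*}
Iterating this identity starting from $D^{(-1)} = \emptyset$ and using that $D$ is finite-dimensional (so the skeletal filtration stabilizes at some $D^{(N)}=D$), a straightforward induction on $i$ yields the claimed formula
\begin{equation*}
L_c(D,f) \;=\; \sum_i L_c\bigl(D^{(i)}\setminus D^{(i-1)},f\bigr).
\end{equation*}

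The main obstacle I expect is not the induction itself but the verification that the long exact sequence for compact supports is available and $f^{\ast}$-equivariant in the present setting, where one works with the associated sheaf $\tilde{\mathbb{R}}'$ rather than $\tilde{\mathbb{R}}$ directly. In particular, one must check that the inclusion/extension-by-zero functors interact correctly with the pullback $f^{\ast}$, which is precisely the content of Lemma~\ref{lema tocho inv} and Corollary~\ref{cor inv}; together with Theorem~\ref{inv f'} these allow one to identify $f^{\ast}$ on each term of the sequence with the naturally induced map, so that trace additivity applies termwise. Finiteness and vanishing of $H^n_c$ in large degrees (needed for the alternating sums and for the Remark following the definition of $L_c$) follow from $D$ being a finite cellular complex, so that only finitely many terms are nonzero.
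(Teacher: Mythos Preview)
Your proposal is correct and follows essentially the same approach as the paper: both use the long exact sequence in compactly supported cohomology for the closed/open pair $(D^{(i-1)},\,D^{(i)}\setminus D^{(i-1)})$ inside $D^{(i)}$, invoke naturality to get $f^{\ast}$-equivariance, apply trace additivity, and iterate over the skeletal filtration. The only cosmetic difference is that the paper inducts from the top dimension downward (noting explicitly that each $D^{(i-1)}$ is locally compact as a closed subset of a locally compact space), while you induct upward from $D^{(-1)}=\emptyset$; your extra discussion of the associated sheaf $\tilde{\mathbb{R}}'$ is harmless but unnecessary here, since for the constant sheaf one has $\tilde{\mathbb{R}}'=\tilde{\mathbb{R}}$.
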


\begin{proof}
    Suppose $D$ has dimension $n$. Then, $D^{(n)}-D^{(n-1)}$, that is, the union of all the open cell of maximal dimension of $D$, is open in $D$. Then, we have the following long exact sequence
\begin{equation*}
    \ldots\rightarrow H_c^m(D^{(n)}-D^{(n-1)})\rightarrow H_c^m (D) \rightarrow H_c^m(D^{(n-1)})\rightarrow\ldots.
\end{equation*}
The naturality of the long exact sequence guarantees that the following diagram is commutative:
\[\begin{tikzcd}
	\ldots & {H_c^m(D^{(n)}-D^{(n-1)})} & {H_c^m (D)} & {H_c^m(D^{(n-1)})} & \ldots \\
	\ldots & {H_c^m(D^{(n)}-D^{(n-1)})} & {H_c^m (D)} & {H_c^m(D^{(n-1)})} & \ldots
	\arrow[from=1-1, to=1-2]
	\arrow[from=1-2, to=1-3]
	\arrow["{f^\ast}"', from=1-2, to=2-2]
	\arrow[from=1-3, to=1-4]
	\arrow["{f^\ast}"', from=1-3, to=2-3]
	\arrow[from=1-4, to=1-5]
	\arrow["{f^\ast}"', from=1-4, to=2-4]
	\arrow[from=2-1, to=2-2]
	\arrow[from=2-2, to=2-3]
	\arrow[from=2-3, to=2-4]
	\arrow[from=2-4, to=2-5],
\end{tikzcd}\]
we obtain
\begin{equation*}
    \mathrm{L}_c(D,f)=\mathrm{L}_c(D^{(n)}-D^{(n-1)},f)+\mathrm{L}_c(D^{(n-1)},f).
\end{equation*}
But $D^{(n-1)}$ is a locally compact space (it is a closed subspace of a locally compact space) so we can repeat the argument and obtain
\begin{equation*}
    \mathrm{L}_c(D^{(n-1)},f)=\mathrm{L}_c(D^{(n-1)}-D^{(n-2)},f)+\mathrm{L}_c(D^{(n-2)},f)
\end{equation*}
Finally, as $D^{(0)}=D^{(0)}-D^{(-1)}$, we arrive to the desired result.
\end{proof}

\begin{theorem}[Equivalence of Lefschetz numbers]\label{iso comp comb}
    Let $X$ be a definable cellular complex and let $D$ be a definable locally compact incomplete subcomplex. If $f\colon X\to X$ is a cellular homeomorphism such that $D$ is $f$-invariant, then $\varLambda_c(D,f)=\mathrm{L}_c(D,f)$.
\end{theorem}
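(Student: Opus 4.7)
The plan is to reduce both sides of the equality to a common base case via two layers of additivity and then to induct on dimension. Since $f$ is cellular, each stratum $D^{(i)} - D^{(i-1)}$ is a definable $f$-invariant incomplete subcomplex. Applying Proposition~\ref{adit sop comp} to $\mathrm{L}_c$ and Lemma~\ref{lemma:additivity_combinatorial_lefschetz_number} to $\varLambda_c$ gives the parallel decompositions
\begin{equation*}
\mathrm{L}_c(D,f) = \sum_i \mathrm{L}_c(D^{(i)} - D^{(i-1)}, f), \qquad \varLambda_c(D,f) = \sum_i \varLambda_c(D^{(i)} - D^{(i-1)}, f),
\end{equation*}
so it suffices to prove the equality when $D$ is a disjoint union of open $n$-cells of a single dimension $n$. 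Such a $D$ splits, as a topological disjoint union, into its $f$-orbits, each of which is clopen in $D$, definable and $f$-invariant. A second application of additivity on both sides reduces the problem to a single $f$-orbit $\mathcal{O} = \{\sigma, f\sigma, \ldots, f^{k-1}\sigma\}$ of size $k$.

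For $k > 1$ both invariants vanish. On the sheaf-theoretic side, $\mathcal{O}$ is homeomorphic to $\bigsqcup_{i=0}^{k-1} \mathbb{R}^n$, so $H_c^n(\mathcal{O}) \cong \mathbb{R}^k$ and $H_c^m(\mathcal{O}) = 0$ for $m \neq n$; the induced map $f^*$ cyclically permutes the one-dimensional summands $H_c^n(f^i\sigma)$ up to sign, and a signed cyclic permutation of length $k \geq 2$ has zero trace. On the combinatorial side, once we triangulate $X$ compatibly with $\mathcal{O}$ and pass to a sufficiently fine subdivision, any simplicial approximation $\tilde{f}^{\text{\rm simp}}$ carries each open simplex lying in $f^i\sigma$ into $f^{i+1}\sigma$, and since $f^{i+1}\sigma \neq f^i\sigma$ no simplex in the incomplete basis of $C_*(\mathcal{O})$ is fixed; hence every diagonal trace contribution vanishes.

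The remaining case of a single $f$-invariant open $n$-cell $\sigma$ is handled by induction on $n$. The base case $n = 0$ is immediate, as a fixed $0$-cell contributes $1$ to both invariants. For the inductive step, note that $\overline{\sigma}$ is compact and $f$-invariant, and that $\overline{\sigma} = \sigma \sqcup \partial \sigma$. Additivity on both sides gives
\begin{equation*}
\mathrm{L}_c(\overline{\sigma}, f) = \mathrm{L}_c(\sigma, f) + \mathrm{L}_c(\partial \sigma, f), \qquad \varLambda_c(\overline{\sigma}, f) = \varLambda_c(\sigma, f) + \varLambda_c(\partial \sigma, f).
\end{equation*}
The inductive hypothesis applied to $\partial \sigma$ (a locally compact incomplete subcomplex of dimension at most $n-1$) equates the boundary contributions. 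For the compact complex $\overline{\sigma}$, compactly supported cohomology agrees with ordinary cohomology, so $\mathrm{L}_c(\overline{\sigma}, f)$ coincides with the classical Lefschetz number of $f|_{\overline{\sigma}}$, which in turn equals $\varLambda_c(\overline{\sigma}, f)$ by the corresponding compact result from~\cite{M-M}. Subtracting yields $\mathrm{L}_c(\sigma, f) = \varLambda_c(\sigma, f)$, closing the induction.

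The main technical obstacle sits in the $k>1$ step: one must verify that a simplicial approximation of the cellular homeomorphism $f$ can be arranged to respect the partition of $\mathcal{O}$ into its constituent open cells, i.e.\ that $\tilde{f}^{\text{\rm simp}}(\tau) \subset f^{i+1}\sigma$ for every simplex $\tau \subset f^i\sigma$. This is a quantitative statement — choose the mesh of the subdivision small relative to the distance between distinct cells of the orbit — that uses both the cellular nature of $f$ and the definable triangulation theorem (Theorem~\ref{thm:triangulation}) to produce triangulations fine enough to separate the cells of~$\mathcal{O}$.
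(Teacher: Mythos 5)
Your proof is essentially correct but takes a more granular route than the paper after the common first step. Both arguments begin by decomposing $\mathrm{L}_c(D,f)$ and $\varLambda_c(D,f)$ over the strata $D^{(i)}-D^{(i-1)}$ via Proposition~\ref{adit sop comp} and Lemma~\ref{lemma:additivity_combinatorial_lefschetz_number}. The paper then stops decomposing: it observes that $X^{(i)}$ and its closed complement $X^{(i)}-(D^{(i)}-D^{(i-1)})$ are both \emph{compact} complexes, on which the two invariants agree with the classical (co)homological Lefschetz number, and obtains the equality on the open stratum by subtraction. You instead split the stratum into $f$-orbits of cells, show both invariants vanish on orbits of length $k>1$, and sandwich a single invariant cell between $\overline{\sigma}$ and $\partial\sigma$. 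Your route works and is arguably more informative (it exhibits why only ``fixed'' cells contribute), but it forces you to verify two extra facts that the paper's compact sandwich avoids. First, your justification of the combinatorial vanishing for $k>1$ by a mesh estimate is not the right mechanism: what one actually uses is the carrier property of simplicial approximations --- $\tilde f^{\mathrm{simp}}(\tau)\subset \overline{f^{i+1}\sigma}$ for every simplex $\tau\subset f^i\sigma$ --- together with $\overline{f^{i+1}\sigma}\cap f^i\sigma=\emptyset$, which holds for \emph{any} simplicial approximation, not just a sufficiently fine one. Second, $\partial\sigma=\overline{\sigma}-\sigma$ need not be an incomplete subcomplex of $X$ (the closure of a cell in a CW complex need not be a union of open cells), so your inductive hypothesis does not literally apply to it; the fix is that $\partial\sigma$ is compact and definable, so after a compatible retriangulation both invariants on it equal the classical Lefschetz number of $f|_{\partial\sigma}$ --- the same argument you already use for $\overline{\sigma}$ --- which also shows the induction on $n$ is unnecessary. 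With those two repairs your argument is a valid, if longer, alternative to the paper's proof.
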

\begin{proof}
    For each dimension $i$, as $f$ is a cellular homeomorphism and $D$ is $f$-invariant, we have that $X^{(i)}$ , $D^{(i)}-D^{(i-1)}$ and $X^{(i)}-(D^{(i)}-D^{(i-1)})$ are $f$-invariant. As each $X^{(i)}$ is a definable cellular complex, by additivity of the combinatorial Lefschetz number (Lemma \ref{lemma:additivity_combinatorial_lefschetz_number}), we have
    \begin{equation}\label{ec 1 teor comp comb}
        \varLambda_c(X^{(i)},f)=\varLambda_c(D^{(i)}-D^{(i-1)},f)+\varLambda_c(X^{(i)}-(D^{(i)}-D^{(i-1)}),f).
    \end{equation}
    Furthermore, by an argument similar to the one of Theorem~\ref{adit sop comp}, we have:
    \begin{equation}\label{ec 2 teor comp comb}
        \mathrm{L}_c(X^{(i)},f)=\mathrm{L}_c(D^{(i)}-D^{(i-1)},f)+\mathrm{L}_c(X^{(i)}-(D^{(i)}-D^{(i-1)}),f).
    \end{equation}
    Now, note that the first and the last term in Equation~\ref{ec 1 teor comp comb} equal the Lefschetz homological (or cohomological as we work on a field) number since the spaces are triangulated by complete (compact) simplicial complexes.  The first and the last terms in Equation~\ref{ec 2 teor comp comb} are also the Lefschetz cohomological number because  of Lemma~\ref{lemas previos}~(D) and because the spaces are compact. As a consequence, we obtain that $\varLambda_c(D^{(i)}-D^{(i-1)},f)=\mathrm{L}_c(D^{(i)}-D^{(i-1)},f)$.
    Finally, from Theorem \ref{adit sop comp}:
    \begin{equation*}
        \mathrm{L}_c(D,f)=\sum_i \mathrm{L}_c(D^{(i)}-D^{(i-1)},f)
    \end{equation*}
    and
    \begin{equation*}
        \varLambda_c(D,f)=\sum_i \varLambda_c(D^{(i)}-D^{(i-1)},f)
    \end{equation*}
    we obtain the result.
\end{proof}

\begin{corollary}[Well-definedness of the combinatorial Lefschetz number]\label{coro:well-definedness_combinatorial_number}
Let $X$ be a definable cellular complex and let $D$ be a definable locally compact incomplete subcomplex. Let $f\colon X\to X$ be a cellular homeomorphism such that $D$ is $f$-invariant.  
Let $(Y,K,g)$ and $(Y',K',g')$ be two triangulations of $(X,D,f)$. Then $\varLambda_c(K,g)=\varLambda_c(K',g')$.
\end{corollary}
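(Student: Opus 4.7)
The plan is to derive this as a direct corollary of the Equivalence Theorem (Theorem~\ref{iso comp comb}). The sheaf-theoretic Lefschetz number
\[
\mathrm{L}_c(D,f)=\sum_n(-1)^n\mathrm{tr}\bigl(f^\ast,H^n_c(D,\tilde{\mathbb{R}})\bigr)
\]
is defined entirely in terms of compactly supported cohomology of the pair $(D,f)$, with no reference to any triangulation. Hence $\mathrm{L}_c(D,f)$ is intrinsically attached to the topological data $(D,f)$ and will serve as the common value linking the two combinatorial computations.

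Concretely, I would invoke Theorem~\ref{iso comp comb} twice. Applying it with the first triangulation $(Y,K,g)$ yields
\[
\varLambda_c(K,g)_Y=\mathrm{L}_c(D,f),
\]
and applying it with the second triangulation $(Y',K',g')$ yields
\[
\varLambda_c(K',g')_{Y'}=\mathrm{L}_c(D,f).
\]
Combining these two identities immediately produces $\varLambda_c(K,g)=\varLambda_c(K',g')$, as desired.

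The only subtle point is the apparent circularity: the statement of Theorem~\ref{iso comp comb} uses the symbol $\varLambda_c(D,f)$, which is precisely the quantity whose triangulation-independence we are trying to establish. The resolution is to read Theorem~\ref{iso comp comb} as working with a single fixed triangulation throughout. Its proof only invokes the additivity Lemma~\ref{lemma:additivity_combinatorial_lefschetz_number} applied to that chosen triangulation, together with the classical (triangulation-independent) identification of the combinatorial Lefschetz number with the Lefschetz trace on the compact complete subcomplexes $X^{(i)}$ and $X^{(i)}-(D^{(i)}-D^{(i-1)})$. Under this reading, the identity $\varLambda_c(K,g)_Y=\mathrm{L}_c(D,f)$ is valid for \emph{any} chosen triangulation of $(X,D,f)$, which is exactly what the argument requires.
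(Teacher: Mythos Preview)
Your argument is correct, and it is close in spirit to the paper's proof but organized more efficiently. Both arguments rest on Theorem~\ref{iso comp comb}; the difference is that the paper first applies the theorem in the simplicial setting to reduce the claim to $\mathrm{L}_c(K,g)=\mathrm{L}_c(K',g')$, and then proves this last equality by writing down the commutative square in $H^\ast_c$ induced by the triangulation homeomorphisms $h\colon D\to K$, $h'\colon D\to K'$ and invoking conjugation-invariance of the trace. You bypass that step entirely by observing that the common target of both computations is the intrinsic quantity $\mathrm{L}_c(D,f)$, which needs no triangulation at all. Your handling of the apparent circularity is exactly right: the proof of Theorem~\ref{iso comp comb} only consumes the classical triangulation-independence of the Lefschetz number on the \emph{compact} pieces $X^{(i)}$ and $X^{(i)}\setminus(D^{(i)}\setminus D^{(i-1)})$, together with additivity (Lemma~\ref{lemma:additivity_combinatorial_lefschetz_number}) for one fixed triangulation, so the conclusion $\varLambda_c(K,g)_Y=\mathrm{L}_c(D,f)$ genuinely holds for whichever triangulation was chosen. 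The paper's explicit diagram is really just an unpacked proof of the homeomorphism-invariance of $\mathrm{L}_c$ that you are taking for granted; your route is shorter and loses nothing.
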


\begin{proof}
    By Theorem \ref{iso comp comb} it is enough to prove that $L_c(K,g)=L_c(K',g')$. Consider the commutative diagram (where we abuse of notation for simplicity):
$$\begin{tikzcd}
K \arrow[r, "g"]                                 & K                                 \\
D \arrow[u, "h"] \arrow[d, "h'"'] \arrow[r, "f"] & D \arrow[u, "h"'] \arrow[d, "h'"] \\
K' \arrow[r, "g'"]                               & K'                               
\end{tikzcd}$$
where $h\colon D\to K$ and $h'\colon D\to K'$ are the triangulation homeomorphisms. We obtain the following commutative diagram induced in cohomology with compact supports:
\[\begin{tikzcd}
	{H_c^\ast(K)} & {H_c^\ast(K)} \\
	{H_c^\ast(K')} & {H_c^\ast(K')}
	\arrow["{g^{\ast}}", from=1-1, to=1-2]
	\arrow["{(h(h')^{-1})^\ast}", from=1-1, to=2-1]
	\arrow["{g'^\ast}", from=2-1, to=2-2]
	\arrow["{(h'(h)^{-1})^\ast}"', from=2-2, to=1-2]
\end{tikzcd}\]
where $(h'(h)^{-1})^\ast$ and $(h(h')^{-1})^\ast$ are isomorphisms. By linear algebra, $L_c(K,g)=L_c(K',h{h'}^{-1}g'h'h^{-1}).$ 
As a consequence, 
$L_c(K,g)=L_c(K',g')$.
\end{proof}
\begin{corollary}[Topological invariance of the combinatorial Lefschetz number]\label{coro:invariance_comb_lefschetz}
    Let be $X$ and $Y$ definable cellular complexes and let $C\subset X$ and $D\subset Y$ definable locally compact incomplete subcomplexes. Let $f:X\rightarrow X$ and $g:Y\rightarrow Y$ be cellular homeomorphisms such that $C$ (resp. $D$) is $f$-invariant (resp. $g$-invariant). If there is a homeomorphism $h:X\rightarrow Y$ such that the following diagram commutes
    \[\begin{tikzcd}
	U & U \\
	V & V,
	\arrow["f", from=1-1, to=1-2]
	\arrow["h"', from=1-1, to=2-1]
	\arrow["h", from=1-2, to=2-2]
	\arrow["g", from=2-1, to=2-2]
\end{tikzcd}\]
then we have $\varLambda_c(C,f) = \varLambda_c(D,g)$.
\end{corollary}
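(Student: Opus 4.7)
The plan is to mimic the approach used in Corollary \ref{coro:well-definedness_combinatorial_number}: reduce from the combinatorial Lefschetz number to the sheaf-theoretic one via Theorem \ref{iso comp comb}, and then use the functoriality of compactly supported cohomology under the conjugating homeomorphism to show that the induced endomorphisms on cohomology have the same trace in each degree.

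Concretely, first I would apply Theorem \ref{iso comp comb} to both sides, so that the statement reduces to proving $L_c(C,f)=L_c(D,g)$. For this, I need that $h$ restricts to a homeomorphism $h|_C\colon C\to D$ intertwining $f|_C$ and $g|_D$. This follows from the commuting square in the statement together with $f$-invariance of $C$ and $g$-invariance of $D$: since $h$ conjugates $f$ to $g$, it sends $f$-orbits to $g$-orbits, so $h(C)$ is a $g$-invariant locally compact incomplete subcomplex of $Y$ that must coincide with $D$ (which is where the hypothesis of the commutative square is used — this is why the vertical arrows really need to identify $C$ with $D$, reading the diagram's $U$, $V$ as $C$, $D$).

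Next, because $h|_C\colon C\to D$ is a proper map (in fact a homeomorphism of locally compact spaces), by Lemma \ref{lemas previos}~(C) and (D) it induces an isomorphism $(h|_C)^{\ast}\colon H^n_c(D,\tilde{\mathbb{R}})\to H^n_c(C,\tilde{\mathbb{R}})$ for every $n$. The commutativity of the square, passed through the contravariant functor $H^\ast_c(-,\tilde{\mathbb{R}})$, yields the commutative square
\[\begin{tikzcd}
	H^n_c(D,\tilde{\mathbb{R}}) \arrow[r, "g^{\ast}"] \arrow[d, "(h|_C)^{\ast}"'] & H^n_c(D,\tilde{\mathbb{R}}) \arrow[d, "(h|_C)^{\ast}"] \\
	H^n_c(C,\tilde{\mathbb{R}}) \arrow[r, "f^{\ast}"] & H^n_c(C,\tilde{\mathbb{R}}),
\end{tikzcd}\]
so $f^{\ast}$ and $g^{\ast}$ are conjugate linear endomorphisms on isomorphic vector spaces.

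Finally, invariance of the trace under conjugation gives $\mathrm{tr}(f^{\ast},H^n_c(C,\tilde{\mathbb{R}}))=\mathrm{tr}(g^{\ast},H^n_c(D,\tilde{\mathbb{R}}))$ in every degree. Summing with alternating signs yields $L_c(C,f)=L_c(D,g)$, and Theorem \ref{iso comp comb} then gives the desired equality $\varLambda_c(C,f)=\varLambda_c(D,g)$. The only delicate point I expect is being precise about the identification $h(C)=D$ implicit in the diagram and verifying that compactly supported cohomology is functorial for the (proper) restriction $h|_C$; everything after that is the same linear-algebra conjugation argument already used in the proof of Corollary \ref{coro:well-definedness_combinatorial_number}.
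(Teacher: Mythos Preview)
Your proposal is correct and follows exactly the approach the paper intends: the paper states this result as a corollary immediately after Corollary~\ref{coro:well-definedness_combinatorial_number} without giving a separate proof, and the implicit argument is precisely the conjugation argument you describe---reduce to $L_c$ via Theorem~\ref{iso comp comb}, then use that $h|_C$ induces isomorphisms on $H^\ast_c(-,\tilde{\mathbb{R}})$ conjugating $f^\ast$ to $g^\ast$. Your reading of the diagram (with $U=C$, $V=D$, so that $h$ restricts to a homeomorphism $C\to D$) is the intended one, and no additional ideas are needed.
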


\section{The sheaf-theoretic Lefschetz number as an integral}
In this section we obtain the Representation Theorem for $f$-$c$-constructible sheaves (Theorem \ref{teor integral}) which evidences the strong relation between the sheaf-theoretic Lefschetz number ad integration with respect to the (combinatorial) Lefschetz number.

\subsection{$\mathbb{N}$-constructible functions and constructible sheaves}\label{subsec:integration_comb_lefschetz_number}
Let $f:X\rightarrow X$ be a cellular  homeomorphism of a definable cellular complex to itself. A function $h:X\to \mathbb{N}$ ($\mathbb{N}=\{0,1,\ldots\}$) is $f$-\textit{integrable} or $f$-\textit{constructible} if there are a finite number of level sets $\{X_j\}_{j=0}^n=\{h^{-1}(j)\}_{j=0}^{n}$ and each level set is a locally compact, definable, $f$-invariant  incomplete subcomplex. Note that $h=\sum _{j=1}^n j\mathds{1}_{h^{-1}(j)}$ (where $\mathds{1}_{h^{-1}(j)}$ stands for the characteristic function on $h^{-1}(j)$) but there may it be other expressions for $h$ of the form $h=\sum _{j=1}^n c_j\mathds{1}_{U_j}$, where the $c_j\in \mathbb{N}$ and the sets $U_j$ are definable and $f$-invariant. We define 
\begin{equation*}
\int_{X}(c_1\mathds{1}_{U_1}+ \ldots +c_n\mathds{1}_{U_n})\,d\varLambda_c f=c_1\varLambda_c(f,U_1)+ \ldots +c_n\varLambda_c(f,U_n).
\end{equation*}
This is well defined (\cite[Theorem 5.2]{M-M}). For an example of a constructible function and an integral, see \cite[Example 7.3]{M-M}.

A $f$-$c$-constructible sheaf $\mathcal{F}$ in $X$ defines the $f$-constructible function $h:X\to \mathbb{N}$ that sends each point $x\in X$ to the dimension of the fiber $\mathcal{F}_x$ and where the level sets consist of locally compact incomplete subcomplexes. Conversely, given a $f$-constructible function $h:X\to \mathbb{N}$ in the cellular complex $X$, we can define a $f$-$c$-constructible sheaf that encodes $h$. To do so, in each $X_j$ we define the constant sheaf $\tilde{\mathbb{R}}_{X_j}^j$ as the extension by zero to $X$ of the constant sheaf on $X_j$ (such extension exists because $X_j$ is locally closed). We define the $f$-$c$-constructible sheaf $\mathcal{F}$ \textit{associated to h} in $X$ given by $\mathcal{F}=\oplus_j \tilde{\mathbb{R}}^j_{X_j}$. 

\begin{proposition}\label{teor coho haz cte}
    Let $X$ be  a cellular complex and $U\subset X$ a definable locally compact incomplete subcomplex. Let $f:X\rightarrow X$ be a cellular homeomorphism such that $U$ is $f$-invariant and let  $\mathcal{F}$ be the constant sheaf of dimension $n$. Then the sheaf-theoretic Lefschetz number of $f$ associated to $\mathcal{F}$ on $U$ is equal to $n$ times the combinatorial Lefschetz number of $f$ on $U$, that is
    \begin{equation*}
L_c(U,f,\mathcal{F})=n\varLambda_c(U,f).
    \end{equation*}
\end{proposition}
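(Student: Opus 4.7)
The plan is to reduce the statement to the case of the one-dimensional constant sheaf $\tilde{\mathbb{R}}$ on $U$ (extended by zero), where the sheaf-theoretic Lefschetz number has already been identified with the combinatorial one by Theorem~\ref{iso comp comb}.

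First, I would observe that the constant sheaf of dimension $n$ decomposes as an internal direct sum of $n$ copies of the $1$-dimensional constant sheaf: $\mathcal{F}\cong \bigoplus_{i=1}^{n} \tilde{\mathbb{R}}$. Because the operations $i^{\ast}$ and $i_!$ commute with finite direct sums, the associated $f$-$c$-constructible sheaf decomposes in the same way, $\mathcal{F}'\cong \bigoplus_{i=1}^{n} (\tilde{\mathbb{R}})'$, where for the partition consisting of the single piece $U$ we simply have $(\tilde{\mathbb{R}})'=(i_U)_!(i_U)^{\ast}\tilde{\mathbb{R}}$.

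Second, cohomology with compact supports is additive with respect to finite direct sums of sheaves, so
\begin{equation*}
H^{k}_c(X,\mathcal{F}')\cong \bigoplus_{i=1}^{n} H^{k}_c(X,(\tilde{\mathbb{R}})')
\end{equation*}
for every $k$. Moreover, the natural isomorphism of Theorem~\ref{inv f'} (or equivalently the direct construction $f^{\ast}(i_U)_!(i_U)^{\ast}\tilde{\mathbb{R}}\cong (i_U)_!(i_U)^{\ast}\tilde{\mathbb{R}}$ via Lemma~\ref{lema tocho inv}) acts diagonally on this decomposition, so the induced endomorphism $f^{\ast}$ on $H^{k}_c(X,\mathcal{F}')$ is block diagonal with $n$ identical blocks. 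Taking traces dimension by dimension and summing with signs gives
\begin{equation*}
L_c(U,f,\mathcal{F})=n\cdot L_c(U,f,\tilde{\mathbb{R}})=n\cdot L_c(U,f).
\end{equation*}

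Third, I would apply Theorem~\ref{iso comp comb} to rewrite $L_c(U,f)=\varLambda_c(U,f)$, which yields the desired identity $L_c(U,f,\mathcal{F})=n\,\varLambda_c(U,f)$.

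I do not expect a serious obstacle here; the only point that requires care is verifying that the natural isomorphism $f^{\ast}\mathcal{F}'\cong \mathcal{F}'$ used in the definition of the trace is compatible with the direct-sum decomposition induced by $\mathcal{F}\cong\bigoplus_{i=1}^{n}\tilde{\mathbb{R}}$. This follows from the functoriality in Lemma~\ref{lema tocho inv} applied component-wise (since $f$ preserves the single stratum $U$), so the diagonal action of $f^{\ast}$ is immediate and the trace computation goes through.
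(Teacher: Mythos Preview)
Your proposal is correct and follows essentially the same route as the paper: decompose the constant sheaf as $\bigoplus_{i=1}^n\tilde{\mathbb{R}}$, use additivity of compactly supported cohomology to obtain $H_c^{\ast}(U,\mathcal{F})\cong\bigoplus_n H_c^{\ast}(U,\mathbb{R})$, deduce that the trace is multiplied by $n$, and then invoke Theorem~\ref{iso comp comb}. The paper realizes the cohomology splitting via an explicit direct sum of injective resolutions rather than citing additivity abstractly, and it is less explicit than you are about the diagonal action of $f^{\ast}$ on the decomposition, but the argument is the same.
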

\begin{proof}
    First of all, as we deal with finite dimensional vector spaces, we must recall that the sheaf $\mathcal{F}$ is nothing more the $n$-sum of the constant sheaf with value $\mathbb{R}$. So, if 
    \begin{equation*}
        0\rightarrow \mathcal{R}\rightarrow\mathcal{J}^0\rightarrow\mathcal{J}^1\rightarrow\mathcal{J}^2\rightarrow \ldots
    \end{equation*}
    is an inyective resolution of the constant sheaf with value $\mathbb{R}$, then 
    \begin{equation*}
        0\rightarrow\mathcal{F}\rightarrow\oplus_{n}\mathcal{J}^0\rightarrow\oplus_{n}\mathcal{J}^1\rightarrow\oplus_{n}\mathcal{J}^2\rightarrow\ldots
    \end{equation*}
    is an inyective resolution of $\mathcal{F}$.

    By taking sections with compact supports and removing the first term, we have
    \begin{equation*}
        0\rightarrow \oplus_{n}\Gamma_c(U,\mathcal{J}^0)\rightarrow \oplus_{n}\Gamma_c(U,\mathcal{J}^1)\rightarrow \oplus_{n}\Gamma_c(U,\mathcal{J}^2)\rightarrow\ldots
    \end{equation*}
    and so $ H_c^{\ast}(U,\mathcal{F})\cong \oplus_n H_c^{\ast}(U,\mathbb{R})$. Then, by Lemma~\ref{coho cte}, we conclude that $ H_c^{\ast}(U,\mathcal{F})\cong\oplus_n H_c^{\ast}(U;\mathbb{R})$. As a consequence, we have that $\varLambda_c(U,f,\mathcal{F})=n\mathrm{L}_c(U,f)$, and Theorem~\ref{iso comp comb} guarantees that the last term is $n\varLambda_c(U,f)$.
\end{proof}

\begin{lemma}
    We have $H^m_c(X,\mathcal{F})=0$ for a sufficiently large $m$.
\end{lemma}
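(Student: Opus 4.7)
The plan is to reduce the vanishing to the classical computation of compactly supported real cohomology of open Euclidean cells, processed via the skeletal filtration already implicit in Proposition \ref{adit sop comp}.

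First, by the construction of the associated sheaf (which we have agreed to denote $\mathcal{F}$), there is a decomposition $\mathcal{F}\cong\bigoplus_{j=1}^{k}(i_j)_!\,i_j^{*}\mathcal{F}$, where $\{X_j\}_{j=1}^{k}$ is the finite partition of $X$ into locally compact, definable, $f$-invariant incomplete subcomplexes on which $\mathcal{F}$ is locally constant. Since $H^m_c$ commutes with finite direct sums and each inclusion $i_j\colon X_j\hookrightarrow X$ is locally closed, one has $H^m_c(X,(i_j)_!\,i_j^{*}\mathcal{F})\cong H^m_c(X_j,i_j^{*}\mathcal{F})$. It therefore suffices to prove, for each $j$, that $H^m_c(X_j,i_j^{*}\mathcal{F})=0$ for $m$ sufficiently large.

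Second, by Lemma \ref{lemas previos}(B), the locally constant sheaf $i_j^{*}\mathcal{F}$ is constant on each connected component of $X_j$; since the partition is finite and the components are open-closed in $X_j$, additivity of $H^m_c$ together with Lemma \ref{lemas previos}(D) reduces the task to establishing $H^m_c(Y,\mathbb{R})=0$ for $m>\dim Y$, where $Y$ is any locally compact definable incomplete subcomplex of $X$.

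Third, I would argue by induction on $d=\dim Y$, following the skeletal scheme used in Proposition \ref{adit sop comp}. The top stratum $Y^{(d)}-Y^{(d-1)}$ is open in $Y$ and is homeomorphic to a finite disjoint union of open $d$-balls, so its compactly supported real cohomology vanishes outside degree $d$. Its complement $Y\cap Y^{(d-1)}$ is a locally compact incomplete subcomplex of strictly smaller dimension, to which the inductive hypothesis applies. Feeding these facts into the long exact sequence
\begin{equation*}
\cdots\to H^m_c(Y^{(d)}-Y^{(d-1)},\mathbb{R})\to H^m_c(Y,\mathbb{R})\to H^m_c(Y\cap Y^{(d-1)},\mathbb{R})\to\cdots
\end{equation*}
gives $H^m_c(Y,\mathbb{R})=0$ for every $m>d$, closing the induction. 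The main obstacle is the set-theoretic bookkeeping in this last step: one must check that the top-dimensional stratum of an incomplete subcomplex is open within it and that its complement is again a locally compact incomplete subcomplex of strictly smaller dimension. Both facts are available because $X$ is a finite definable CW complex, so the argument reduces to invoking standard properties of the skeletal filtration of $X$ that were already used in Proposition \ref{adit sop comp}.
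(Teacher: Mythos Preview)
Your argument is correct and reaches the same conclusion as the paper, but it is organized quite differently from the paper's proof. The paper does not split $\mathcal{F}'$ into its direct summands; instead it works with $\mathcal{F}$ globally and peels off, one at a time, the open sets $U_{j,i}$ consisting of the $i$-dimensional cells lying in the constant component $X_j$, using the open--closed long exact sequence in $H^*_c$ at each step (exactly the iteration that later drives the proof of Theorem~\ref{teor integral}). Your route is cleaner: you use the very definition of the associated sheaf as a direct sum, together with the standard identity $H^m_c(X,(i_j)_!\mathcal{G})\cong H^m_c(X_j,\mathcal{G})$ for a locally closed inclusion, to reduce immediately to constant coefficients on a single locally compact incomplete subcomplex, and then run a single skeletal induction. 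The paper's approach buys compatibility with the bookkeeping of Theorem~\ref{teor integral} (the same $U_{j,i}$ reappear there), while yours buys brevity and a sharper bound, namely vanishing for $m>\dim X$.

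One minor remark: your appeal to Lemma~\ref{lemas previos}(B) is unnecessary and, strictly speaking, inapplicable, since connected components of $X_j$ need not be simply connected. Constancy on each connected component is part of the \emph{definition} of a $c$-constructible sheaf in this paper, so you can simply cite the definition there.
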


\begin{proof}
    Let $U_{1,n}$ be the open set consisting of the top dimensional cells that are in a constant component $X_1$ of the sheaf. As it is open and $f$-invariant, we can consider the following diagram
    \[\begin{tikzcd}
	{} & {H^m_c(U_{1,n},\mathcal{F})} & {H^m_c(X,\mathcal{F})} & {H^m_c(X-U_{1,n},\mathcal{F})} & {} \\
	{} & {H^m_c(U_{1,n},\mathcal{F})} & {H^m_c(X,\mathcal{F})} & {H^m_c(X-U_{1,n},\mathcal{F})} & {}
	\arrow[from=1-1, to=1-2]
	\arrow[from=1-2, to=1-3]
	\arrow["{f^{\ast}}"', from=1-2, to=2-2]
	\arrow[from=1-3, to=1-4]
	\arrow["{f^{\ast}}"', from=1-3, to=2-3]
	\arrow[from=1-4, to=1-5]
	\arrow["{f^{\ast}}"', from=1-4, to=2-4]
	\arrow[from=2-1, to=2-2]
	\arrow[from=2-2, to=2-3]
	\arrow[from=2-3, to=2-4]
	\arrow[from=2-4, to=2-5].
\end{tikzcd}\]
    As, by the proof of Proposition~\ref{teor coho haz cte} $H^n_c(U_{1,n},\mathcal{F})\cong \oplus_s H^n_c(U_{1,n}; \mathbb{R})$ for certain integer $s$, we have that this term vanishes for a sufficiently large $m$. Now, if we repeat the process with $U_{2,n}$, we obtain

    \[\begin{tikzcd}[column sep=tiny]
	{} & {H^m_c(U_{2,n},\mathcal{F})} & {H^m_c(X-U_{1,n},\mathcal{F})} & {H^m_c(X-\{U_{1,n}\cup U_{2,n}\},\mathcal{F})} & {} \\
	{} & {H^m_c(U_{2,n},\mathcal{F})} & {H^m_c(X-U_{1,n},\mathcal{F})} & {H^m_c(X-\{U_{1,n}\cup U_{2,n}\},\mathcal{F})} & {}
	\arrow[from=1-1, to=1-2]
	\arrow[from=1-2, to=1-3]
	\arrow["{f^{\ast}}"', from=1-2, to=2-2]
	\arrow[from=1-3, to=1-4]
	\arrow["{f^{\ast}}"', from=1-3, to=2-3]
	\arrow[from=1-4, to=1-5]
	\arrow["{f^{\ast}}"', from=1-4, to=2-4]
	\arrow[from=2-1, to=2-2]
	\arrow[from=2-2, to=2-3]
	\arrow[from=2-3, to=2-4]
	\arrow[from=2-4, to=2-5].
\end{tikzcd}\]

Once again, the term $H^m_c(U_{2,n},\mathcal{F})$ vanishes for a sufficiently large $m$. By repeating this process finitely many times the right term of the diagram vanishes for a sufficiently large $m$. Then, if we climb on the middle term of the diagrams, we see that $H^m_c(X,\mathcal{F})$ must vanish for a sufficiently large $m$. 
\end{proof}

\subsection{Sheaf-theoretic Lefschetz number and Lefschetz calculus}
We establish a relation between the Lefschetz number on sheaves and the integral with respect to the Lefschetz number.

\begin{theorem}[Representation theorem]\label{teor integral}
    Let $X$ be a definable cellular complex and let $f\colon X \to X$ be a cellular homeomorphism. If $\mathcal{F}$ is a $f$-$c$-constructible sheaf on $X$ such that $f\colon X \to X$ respects the constant components of $\mathcal{F}$, then 
\begin{equation}\label{eq:corresopndence_sheaves_integrals_functions}
        L_c(X,f,\mathcal{F})=\int_X h\mathrm{d}\varLambda f,
    \end{equation}
    where $h\colon X \to \mathbb{N}$ is the $f$-constructible function associated to $\mathcal{F}$.
    Conversely, if $h\colon X \to \mathbb{N}$ is a $f$-constructible function $h=\sum _{j=1}^n c_j\mathds{1}_{U_j}$, its associated $f$-$c$-constructible sheaf $\mathcal{F}$ satisfies Equation (\ref{eq:corresopndence_sheaves_integrals_functions}).
\end{theorem}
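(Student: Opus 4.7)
The plan is to exploit the direct sum decomposition of the associated sheaf $\mathcal{F}' = \bigoplus_{j} (i_j)_! i_j^\ast \mathcal{F}$ and reduce the identity to applying Proposition~\ref{teor coho haz cte} on each piece. First I would refine the partition of $X$: by hypothesis $f$ respects the constant components of $\mathcal{F}$, so after replacing $\{X_j\}$ by the (finite) collection of their connected components, I may assume that each $X_j$ is an $f$-invariant, definable, locally compact incomplete subcomplex on which $i_j^\ast \mathcal{F}$ is the constant sheaf of some fixed dimension $n_j = h|_{X_j}$ (using Lemma~\ref{loc cte en conexo}).

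Next, since $H^n_c(X,-)$ sends finite direct sums to direct sums and the standard compact-support identity for locally closed inclusions yields $H^n_c\bigl(X, (i_j)_! i_j^\ast \mathcal{F}\bigr) \cong H^n_c\bigl(X_j, i_j^\ast \mathcal{F}\bigr)$, one obtains a canonical isomorphism
\[
H^n_c(X, \mathcal{F}') \;\cong\; \bigoplus_{j} H^n_c\bigl(X_j, i_j^\ast \mathcal{F}\bigr).
\]
Theorem~\ref{inv f'} guarantees that the induced map $f^\ast$ respects this decomposition summand by summand, so additivity of trace gives
\[
L_c(X, f, \mathcal{F}) \;=\; \sum_{j} L_c\bigl(X_j, f|_{X_j}, i_j^\ast \mathcal{F}\bigr).
\]
Proposition~\ref{teor coho haz cte} then applies to each term: since $i_j^\ast \mathcal{F}$ is the constant sheaf of dimension $n_j$ on the $f$-invariant locally compact subcomplex $X_j$, we obtain $L_c\bigl(X_j, f|_{X_j}, i_j^\ast \mathcal{F}\bigr) = n_j \varLambda_c(X_j, f)$. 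Summing over $j$ yields
\[
L_c(X, f, \mathcal{F}) \;=\; \sum_{j} n_j \varLambda_c(X_j, f) \;=\; \int_X h \, d\varLambda f,
\]
by the definition of the combinatorial integral applied to the canonical representation $h = \sum_{j} n_j \mathds{1}_{X_j}$.

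For the converse, given an $f$-constructible $h$, the associated sheaf $\mathcal{F} = \bigoplus_j \tilde{\mathbb{R}}^j_{X_j}$ has $h$ as its dimension function, so the forward direction applied to $\mathcal{F}$ produces the identity, and the well-definedness of the combinatorial integral (\cite[Theorem 5.2]{M-M}) ensures that any other representation $h = \sum_j c_j \mathds{1}_{U_j}$ gives the same value.

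The main obstacle I anticipate is the bookkeeping required to verify that the decomposition of $\mathcal{F}'$ into $(i_j)_!i_j^\ast \mathcal{F}$-summands remains compatible with the action of $f^\ast$ after passing to compact-support cohomology, so that the trace genuinely splits additively across $j$. This is the content of Theorem~\ref{inv f'}, but one must check that the natural isomorphism it furnishes intertwines both with the additivity of $H^n_c$ over direct sums and with the adjunction isomorphism $H^n_c(X,(i_j)_! -) \cong H^n_c(X_j,-)$; once that diagrammatic compatibility is set up, Proposition~\ref{teor coho haz cte} finishes the argument piece by piece.
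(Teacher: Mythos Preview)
Your argument is correct and takes a genuinely different route from the paper. The paper does \emph{not} exploit the direct-sum structure of $\mathcal{F}'$ all at once; instead it proceeds by an iterated open--closed long exact sequence. Concretely, the paper picks the open set $U_{1,n}$ of top-dimensional cells lying in the first constant component $X_1$, uses the exact triangle $H^\ast_c(U_{1,n},\mathcal{F})\to H^\ast_c(X,\mathcal{F})\to H^\ast_c(X\setminus U_{1,n},\mathcal{F})$ to split off $L_c(U_{1,n},f,\mathcal{F})$, and then repeats the process, peeling cells off component by component and dimension by dimension until it reaches the formula $L_c(X,f,\mathcal{F})=\sum_{i,j} L_c(U_{j,i},f,\mathcal{F})$. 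Proposition~\ref{teor coho haz cte} is then applied to each $U_{j,i}$, followed by a regrouping via additivity of $\varLambda_c$.

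Your approach bypasses this induction entirely by using the identity $H^n_c(X,(i_j)_!\,\mathcal{G})\cong H^n_c(X_j,\mathcal{G})$ together with the summand-by-summand form of Theorem~\ref{inv f'}. What you gain is a shorter, more conceptual proof that makes transparent why $L_c$ decomposes over the partition; what the paper's approach gains is that it stays entirely within the long-exact-sequence machinery already used elsewhere (e.g.\ in Proposition~\ref{adit sop comp} and the vanishing lemma), and it never needs to invoke the adjunction isomorphism for $(i_j)_!$ or check its compatibility with $f^\ast$. The ``bookkeeping obstacle'' you flag is real but routine: since the proof of Theorem~\ref{inv f'} already produces the isomorphism $f^\ast\mathcal{F}'\cong\mathcal{F}'$ one summand at a time via Lemma~\ref{lema tocho inv}, and since both $f^\ast$ on sheaves and the functor $H^n_c(X,-)$ preserve finite direct sums, the induced endomorphism of $\bigoplus_j H^n_c(X_j,i_j^\ast\mathcal{F})$ is block-diagonal, which is exactly what you need for the trace to split.
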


\begin{proof}
    Let us begin by considering $U_{1,n}$ as the open set consisting of the top dimensional cells that are in a constant component $X_1$ of the sheaf. As it is open and $f$-invariant, we can consider the following diagram
\[\begin{tikzcd}
	{} & {H^n_c(U_{1,n},\mathcal{F})} & {H^n_c(X,\mathcal{F})} & {H^n_c(X-U_{1,n},\mathcal{F})} & {} \\
	{} & {H^n_c(U_{1,n},\mathcal{F})} & {H^n_c(X,\mathcal{F})} & {H^n_c(X-U_{1,n},\mathcal{F})} & {}
	\arrow[from=1-1, to=1-2]
	\arrow[from=1-2, to=1-3]
	\arrow["{f^{\ast}}"', from=1-2, to=2-2]
	\arrow[from=1-3, to=1-4]
	\arrow["{f^{\ast}}"', from=1-3, to=2-3]
	\arrow[from=1-4, to=1-5]
	\arrow["{f^{\ast}}"', from=1-4, to=2-4]
	\arrow[from=2-1, to=2-2]
	\arrow[from=2-2, to=2-3]
	\arrow[from=2-3, to=2-4]
	\arrow[from=2-4, to=2-5].
\end{tikzcd}\]
     As a consequence, since the three cohomology groups vanish on a sufficiently large dimension $n$, we have
    \begin{equation*}
\varLambda_c(X,f,\mathcal{F})=\varLambda_c(U_{1,n},f,\mathcal{F})+\varLambda_c(X-U_{1,n},\mathcal{F}).
    \end{equation*}
    Now, if we repeat the argument taking $U_{2,n}$ instead of $U_{1,n}$ and $X-U_{1,n}$ instead of $X$ we arrive to 
    \begin{equation*}
\varLambda_c(X,f,\mathcal{F})=\varLambda_c(U_{1,n},f,\mathcal{F})+\varLambda_c(U_{2,n},\mathcal{F})+\varLambda_c(X-U_{1,n}-U_{2,n},\mathcal{F}).
    \end{equation*}
    Then we can repeat this argument until arrive to
    \begin{equation*}
        \varLambda_c(X,f,\mathcal{F})=\sum_{j=1}^{m}\varLambda_c(U_{j,n},f,\mathcal{F})+\varLambda_c(X^{(n-1)},f,\mathcal{F}),
    \end{equation*}
    and then apply the same for the cells in progresive lower dimension. 

    Finally we obtain
    \begin{equation}\label{ec aditividad}
        \varLambda_c(X,f,\mathcal{F})=\sum_{i=0}^n\left(  \sum_{j=1}^m \varLambda_c(U_{j,i},f,\mathcal{F})  \right)
    \end{equation}

We apply Theorem~\ref{teor coho haz cte} to each summand of the right-hand side of the equation. Then,
\begin{equation*}
    \varLambda_c(X,f,\mathcal{F})=\sum_{i=0}^n  \left(\sum_{j=i}^m c_j \varLambda_c(U_{j,i},f)\right),
\end{equation*}
where $c_j$ denotes the dimension of the sheaf on $X_j$.  Regrouping the sum and using the the additivity of the combinatorial Lefschetz number we obtain:
\begin{equation*}
    \sum_{i=0}^n  \left(\sum_{j=i}^m c_j\varLambda_c(U_{j,i},f)\right)= \sum_{j=i}^m c_j \left(\sum_{i=0}^n \varLambda_c(U_{j,i},f)\right)=\sum_{j=i}^m c_j \varLambda_c(X_{j},f)
\end{equation*}
where $X_{j}=\sqcup_iU_{j,i}$. Observe that $\int_x h\mathrm{d}\varLambda f=\sum_{j=i}^m c_j \varLambda_c(X_{j},f)$ where $h=\sum _{j=1}^n c_j\mathds{1}_{X_j}$ is the associated $f$-constructible function to the $f$-$c$-constructible sheaf $\mathcal{F}$. For the converse of the theorem, note that the associated associated $f$-constructible function to the associated $f$-$c$-constructible sheaf $\mathcal{F}$ to $h\colon X\to \mathbb{N}$ is $h\colon X\to \mathbb{N}$.
\end{proof}
The following example illustrates  Theorem~\ref{teor integral}.
\begin{example}
    Let us consider as cellular complex $X$ the simplicial complex of Figure~\ref{figura complejo}. 
         \begin{figure}[htb!]
    \centering
     \includegraphics[scale=0.50]{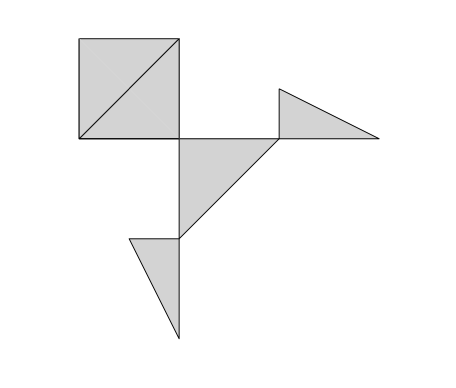} 
     \caption{Complex X.}
    \label{figura complejo}
\end{figure}
    
For simplicity, we take as the sheaf $\mathcal{F}$ a sheaf given with the structure of an associated sheaf. Consider the partition $\{X_1, X_2, X_3, X_4, X_5\}$ given in Figure~\ref{figura particion}.
  \begin{figure}[htb!]
    \centering
     \includegraphics[scale=0.50]{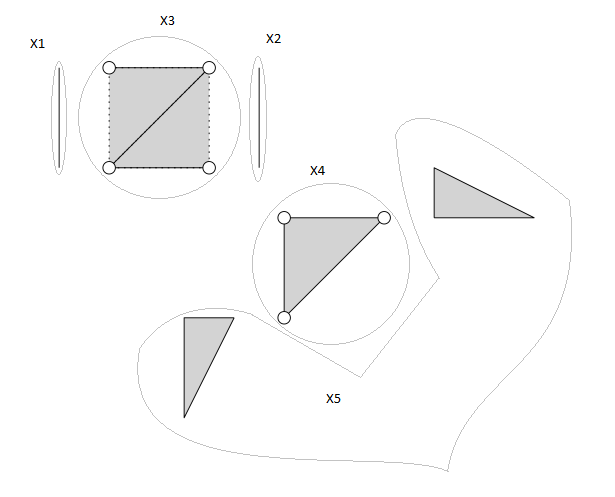} 
     \caption{Elements of the partition.}
    \label{figura particion}
\end{figure}

We take the sheaf $\mathcal{F}=i_1!\mathcal{F}_1\oplus i_2!\mathcal{F}_2\oplus i_3!\mathcal{F}_3 \oplus i_4!\mathcal{F}_4 \oplus i_5!\mathcal{F}_5$, where $\mathcal{F}_1=\tilde{\mathbb{R}}^1,\;\mathcal{F}_2=\tilde{\mathbb{R}}^4,\;\mathcal{F}_3=\tilde{\mathbb{R}}^2,\;\mathcal{F}_4=\tilde{\mathbb{R}}^1$ and $\mathcal{F}_5=\tilde{\mathbb{R}}^3$ are constant sheaves.

Let us present now the map $f$. If we see the upper square as $[0,1]\times[0,1]$, we take $f$ there to be the homeomorphism that sends each $(x,y)$ to $(x+x(1-x)(x-y),y)$. This map enlarges the intermediate parts of the square and fixes the edges and the diagonal. If we take the intermediate triangle as the convex hull of the points $(-1,0), (1,0)$ and $(0,1)$ of the plane, then we define $f$ there to be the homeomorphism that sends $(x,y)$ to $(-x-x(y-(1-x)),y)$ if $x\le 0$ and to $((-x-x(y-(1+x))),y)$ if $x>0$. This map reflexes the triangle and contracts the interior and the base, fixing the vertical line at $(0,0)$. Finally, the map $f$ is taken to swap the triangles of $X_5$. Note that here $\mathcal{F}=\mathcal{F}'$ and the $X_i's$ are precisely those shown  in Figure~\ref{figura particion}.

Now we'd like to compute the Lefschetz number $\varLambda_c(X,f,\mathcal{F})$ without computing directly the cohomology of the sheaf. We will use Theorem~\ref{teor integral}. The first step is to consider the associated constructible funcion $h$ defined by
\begin{equation*}
    h= 1\cdot\mathds{1}_{X_1}+4\cdot\mathds{1}_{X_2}+2\cdot\mathds{1}_{X_3}+1\cdot\mathds{1}_{X_4}+3\cdot\mathds{1}_{X_5}.
\end{equation*}
Now we compute
\begin{equation*}
    \int_X h d\varLambda f= 1\varLambda_c(X_1,f)+4\varLambda_c(X_2, f)+ 2\varLambda_c(X_3, f) + 1\varLambda_c(X_4, f) + 3\varLambda_c(X_5,f).
\end{equation*}
Now, as the identity is a simplicial approximation of $f$ at the square and the reflection is a simplicial approximation of $f$ at the middle triangle, this leads to
\begin{equation*}
    L_c(X,f,\mathcal{F})=\int_X hd\varLambda f =1+4-2+0+0=3.
\end{equation*}

\end{example}



\subsection{$\mathbb{Z}$-valuated constructible functions and Barrow's rule.}

We study constructible functions valued in the integers ($\mathbb{Z}$) by considering complexes of sheaves instead of sheaves.

In the notation of \cite{C-G-M}, if we consider a bounded complex of $f$-constructible sheaves, that is:
\begin{equation*}
    \mathcal{F}^\bullet= \ldots \xrightarrow{d} \mathcal{F}^{i-1} \xrightarrow{d} \mathcal{F}^{i} \xrightarrow{d} \mathcal{F}^{i+1}\xrightarrow{d} \ldots,
\end{equation*}
then we obtain an associated $\mathbb{Z}$ valued $f$-constructible function by
\begin{equation*}
    h(x)=\sum_i (-1)^i\mathrm{dim}(\mathcal{H}^i\mathcal{F}_x^i),
\end{equation*}
where the level sets consist of locally compact incomplete subcomplexes.
Conversely, given one of these $f$-constructible functions with values on $\mathbb{Z}$, then there exists a complex of $f$-$c$-constructible sheaves $\mathcal{F}^\bullet$ such that $h(x)=\sum_i (-1)^i \mathrm{dim}(\mathcal{H}^i, \mathcal{F}_x^\bullet)$. For it suffices to consider the complex
     \[\begin{tikzcd}
	0 & {\oplus_{n<0}\tilde{\mathbb{R}}_{X_n}^{-n}} & {\oplus_{n>0}\tilde{\mathbb{R}}_{X_n}^{n}} & 0
	\arrow["0", from=1-1, to=1-2]
	\arrow["0", from=1-2, to=1-3]
	\arrow["0", from=1-3, to=1-4],
\end{tikzcd}\]  
where the $X_j$'s are the level sets of $h$ ($X_j=h^{-1}(j)$).
Again, the relationship in our case will be between bounded complexes of  $c$-constructible sheaves on $X$ and cellular $f$-constructible functions.

\begin{theorem}
    [Barrow's rule]\label{coro:barrow}
    Let be $h$ a  $f$-constructible function on $X$ with level sets consisting of locally compact incomplete subcomplexes and let be $\{X_j\}$ the level sets of $h$. Then 
    \begin{equation*}
        \int_X h \mathrm{d}\varLambda f= L_c(X,f,\oplus_{j>0}\tilde{\mathbb{R}}^j_{X_j})-L_c(X,f,\oplus_{j<0}\tilde{\mathbb{R}}^{-j}_{X_j}).
    \end{equation*}
\end{theorem}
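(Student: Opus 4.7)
The plan is to reduce Barrow's rule to the Representation theorem (Theorem \ref{teor integral}) by decomposing the $\mathbb{Z}$-valued function $h$ into its positive and negative parts. Writing $h = h^{+} - h^{-}$ with
\[
h^{+} = \sum_{j>0} j\,\mathds{1}_{X_j}, \qquad h^{-} = \sum_{j<0} (-j)\,\mathds{1}_{X_j},
\]
both $h^{+}$ and $h^{-}$ are $\mathbb{N}$-valued $f$-constructible functions in the sense of Subsection \ref{subsec:integration_comb_lefschetz_number}, since the level sets $X_j = h^{-1}(j)$ are, by hypothesis, locally compact definable $f$-invariant incomplete subcomplexes. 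Consequently each of them is in the domain of the integral $\int_X (\cdot)\,d\varLambda f$ as originally defined.

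Next, I would invoke linearity of the integral over the decomposition. By the very definition of the integral with respect to $\varLambda f$ as a $\mathbb{Z}$-linear combination of combinatorial Lefschetz numbers on the level sets, and the fact that the level sets of $h$, $h^{+}$ and $h^{-}$ are compatible (they agree with those $X_j$ on which the respective sign is nonzero, the remaining ones contributing zero), one has
\[
\int_X h\, d\varLambda f = \int_X h^{+}\, d\varLambda f - \int_X h^{-}\, d\varLambda f.
\]
This step is essentially a bookkeeping verification against the formula $\int_X \sum c_j\mathds{1}_{U_j}\,d\varLambda f = \sum c_j \varLambda_c(f,U_j)$ extended $\mathbb{Z}$-linearly; it requires nothing beyond well-definedness of the integral (Corollary \ref{coro:well-definedness_combinatorial_number}).

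Now I would identify the $f$-$c$-constructible sheaves associated to $h^{+}$ and $h^{-}$. By the explicit construction in Subsection \ref{subsec:integration_comb_lefschetz_number}, the sheaf associated to $h^{+}$ is precisely $\bigoplus_{j>0} \tilde{\mathbb{R}}^{j}_{X_j}$, and the one associated to $h^{-}$ is $\bigoplus_{j<0} \tilde{\mathbb{R}}^{-j}_{X_j}$. Applying the Representation theorem (Theorem \ref{teor integral}) to each of these pairs yields
\[
\int_X h^{+}\, d\varLambda f = L_c\bigl(X,f,\textstyle\bigoplus_{j>0} \tilde{\mathbb{R}}^{j}_{X_j}\bigr),\qquad \int_X h^{-}\, d\varLambda f = L_c\bigl(X,f,\textstyle\bigoplus_{j<0} \tilde{\mathbb{R}}^{-j}_{X_j}\bigr),
\]
and subtracting gives exactly the stated formula.

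The main potential obstacle is subtle rather than computational: one must ensure that the definition of the integral cleanly extends from $\mathbb{N}$-valued to $\mathbb{Z}$-valued constructible functions and that the decomposition $h = h^{+}-h^{-}$ respects it. This is where Corollary \ref{coro:well-definedness_combinatorial_number} and Lemma \ref{lemma:additivity_combinatorial_lefschetz_number} are crucial, since they guarantee that reorganising the level-set decomposition does not change the value of the integral. Once linearity is in place, Theorem \ref{teor integral} does the entire remaining work, and no further analysis of the complex of sheaves is needed.
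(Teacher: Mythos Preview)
Your proposal is correct and follows essentially the same route as the paper: both split the integral by the sign of the level-set value and then invoke the Representation theorem (Theorem~\ref{teor integral}) on each piece to rewrite $\sum_{j>0} j\,\varLambda_c(X_j,f)$ and $\sum_{j<0}(-j)\,\varLambda_c(X_j,f)$ as the two sheaf-theoretic Lefschetz numbers. The paper's version is simply more compressed, performing the sign-split directly on the defining sum $\sum_{j\in\mathbb{Z}} j\,\varLambda_c(X_j,f)$ rather than first naming $h^{+}$ and $h^{-}$.
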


\begin{proof}
    By the definition of the integral we have
    \begin{align*}
        \int_X hd\varLambda f&= \sum_{j\in \mathbb{Z}} j\cdot\varLambda_c(X_j,f)=\sum_{j<0} j\cdot\varLambda_c(X_j,f)+\sum_{j>0} j\cdot\varLambda_c(X_j,f)\\
        &=\sum_{j>0} j\cdot\varLambda_c(X_j,f)-\sum_{j>0} j\cdot\varLambda_c(X_{-j},f)\\
        &=L_c(X,f,\oplus_{j>0}\tilde{\mathbb{R}}^j_{X_j})-L_c(X,f,\oplus_{j<0}\tilde{\mathbb{R}}^{-j}_{X_j}). \qedhere
    \end{align*}
\end{proof}

\section{Axiomatization of the sheaf-theoretic Lefschetz number}\label{sec:axiomatization_lefschetz_number}

In this section, we study to what extent the Representation Theorem for $f$-$c$-constructible sheaves (Theorem \ref{teor integral}) characterizes and determines the sheaf-theoretic Lefschetz number. We begin by introducing the family where we are going to define the axioms.
\begin{definition}
    Let be $\mathcal{C}$ the family of triples $(X,f,\mathcal{F})$ where $X$ is a connected definable cellular complex, $\mathcal{F}$ is a $f$-$c$-constructible sheaf and $f$ a cellular homeomorphism that respects the constant components of $\mathcal{F}$.
\end{definition}

\begin{theorem}\label{thm:axiomatization_lefschetz}
    The sheaf-theoretic Lefschetz number is the only map between $\mathcal{C}$ and $\mathbb{Z}$ that satisfies the following axioms:
    \begin{enumerate}
        \item (Cofibration Axiom). Let $X$ be a definable CW-complex and $f\colon X\to X$ a cellular homeomorphism. If $A$ is a connected $f$-invariant definable subcomplex, $A\rightarrow X\rightarrow X/A$ is the resulting cofiber sequence and there exist a commutative diagram 
        \[\begin{tikzcd}
	A & X & X/A \\
	A & X & X/A
	\arrow["{f'}"', from=1-1, to=2-1]
	\arrow["f"', from=1-2, to=2-2]
	\arrow["{\overline{f}}"', from=1-3, to=2-3]
	\arrow[from=1-1, to=1-2]
	\arrow[from=1-2, to=1-3]
	\arrow[from=2-1, to=2-2]
	\arrow[from=2-2, to=2-3]
\end{tikzcd}\]
then 
\begin{equation*}
    L_c(X,f,\mathcal{F})=L_c(X/A, \overline{f},\pi_{\ast}\mathcal{F})+L_c(A,f',\mathcal{F})-r,
\end{equation*}
where $r$ is the dimension of the sheaf $(\pi_\ast\mathcal{F)}_{|A/A}$
\item (Wedge Spheres Axiom) If $X$ is homeomorphic to a finite wedge of $n$-spheres or a complex of dimension $0$ or $1$, $\mathcal{F}$ is a $f$-$c$-constructible sheaf and $f$ is a cellular homeomorphism that respects the constant components of $\mathcal{F}$, then
\begin{equation*}
    L_c(X,f,\mathcal{F})=\int_X h d\varLambda f,
\end{equation*}
where $h$ is the constructible function associated to the sheaf.
    \end{enumerate}
\end{theorem}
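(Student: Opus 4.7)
The plan is to prove the theorem in two steps: first verify that $L_c$ itself satisfies both axioms, then establish uniqueness by induction on $\dim X$.

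For existence, the Wedge Spheres Axiom for $L_c$ is exactly the Representation Theorem (Theorem \ref{teor integral}) restricted to wedges of spheres and to complexes of dimension $0$ or $1$, so nothing new is required there. For the Cofibration Axiom, I would set $U = X \setminus A$, which is open and $f$-invariant, and apply additivity (Proposition \ref{adit sop comp} extended to $c$-constructible coefficients via the long exact sequence in compactly supported cohomology, in exact parallel to the argument at the start of the proof of Theorem \ref{teor integral}) to obtain
\[
L_c(X,f,\mathcal{F}) = L_c(A,f',\mathcal{F}|_A) + L_c(U,f|_U,\mathcal{F}|_U).
\]
Decomposing $X/A$ analogously as $\{A/A\} \sqcup U$, and using that $\pi\colon X \to X/A$ restricts to a homeomorphism on $U$ so that $\pi_\ast\mathcal{F}|_U \cong \mathcal{F}|_U$, I obtain $L_c(X/A,\bar{f},\pi_\ast\mathcal{F}) = r + L_c(U,f|_U,\mathcal{F}|_U)$ with $r = \dim((\pi_\ast\mathcal{F})_{A/A})$. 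Subtracting the two identities gives the Cofibration Axiom for $L_c$.

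For uniqueness, let $L\colon \mathcal{C}\to \mathbb{Z}$ satisfy both axioms and induct on $n = \dim X$. The base cases $n = 0$ and $n = 1$ are immediate from the Wedge Spheres Axiom, which directly forces $L(X,f,\mathcal{F}) = \int_X h\, d\varLambda f = L_c(X,f,\mathcal{F})$ via Theorem \ref{teor integral}. For the inductive step with $n \geq 2$, take $A = X^{(n-1)}$: since $X$ is connected and $n \geq 2$, $A$ contains the connected $1$-skeleton and is therefore itself connected, and cellularity of $f$ makes it $f$-invariant. By construction $X/A$ is a wedge of $n$-spheres (one per $n$-cell of $X$), so the Wedge Spheres Axiom yields $L(X/A, \bar{f}, \pi_\ast\mathcal{F}) = L_c(X/A, \bar{f}, \pi_\ast\mathcal{F})$, while the inductive hypothesis gives $L(A, f', \mathcal{F}|_A) = L_c(A, f', \mathcal{F}|_A)$. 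Applying the Cofibration Axiom to both $L$ and $L_c$ (which satisfies it by the first step) makes the common correction $-r$ cancel and forces $L(X,f,\mathcal{F}) = L_c(X,f,\mathcal{F})$.

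The main obstacle is the verification of the Cofibration Axiom for $L_c$: one must pin down precisely the relationship between $H^\ast_c(X,\mathcal{F})$, $H^\ast_c(A,\mathcal{F}|_A)$, and $H^\ast_c(X/A,\pi_\ast\mathcal{F})$, together with an explicit identification of the stalk $(\pi_\ast\mathcal{F})_{A/A}$, so that the correction constant $r$ really matches the discrepancy between the two long exact sequences. A secondary technical point is confirming that the intermediate triples $(A,f',\mathcal{F}|_A)$ and $(X/A,\bar{f},\pi_\ast\mathcal{F})$ actually belong to $\mathcal{C}$ (connectedness of $A$, cellularity of $\bar{f}$, and $c$-constructibility of $\pi_\ast\mathcal{F}$ with the induced partition); this is routine but is required for the induction to be well-posed.
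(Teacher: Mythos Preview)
Your proposal is correct and follows essentially the same route as the paper: both verify the Cofibration Axiom by applying the long exact sequence in compactly supported cohomology twice (once to the decomposition $X = A \sqcup (X\setminus A)$ and once to $X/A = \{A/A\} \sqcup (X\setminus A)$) and then comparing, and both prove uniqueness by induction on $\dim X$ with $A = X^{(n-1)}$. Your write-up is slightly more streamlined and you flag the connectedness of $X^{(n-1)}$ and the membership of the auxiliary triples in $\mathcal{C}$, points the paper leaves implicit, but the substance is the same.
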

\begin{proof}
Let us start by cheeking that the Lefschetz number satisfies the axioms.
    First at all, since $A$ is a definable cellular subcomplex of $X$ (in particular closed, locally compact and with definable skeletons on all dimensions) and $f$-invariant, $\mathcal{F}$ will be also a $c$-constructible sheaf on $A$. Moreover, 
        since $X-A$ is also locally compact (it is open in a locally compact space) and $f$-invariant, it makes sense to consider the following diagram with exact rows, where vertical arrows are induced by $f$.
        \[\begin{tikzcd}[row sep=normal,column sep=tiny]
	0 & {H_c^0(X- A,\mathcal{F})} & {H_c^0(X,\mathcal{F})} & {H_c^0(A,\mathcal{F})} & {H_c^1(X- A,\mathcal{F})} & \ldots \\
	0 & {H_c^0(X- A,\mathcal{F})} & {H_c^0(X,\mathcal{F})} & {H_c^0(A,\mathcal{F})} & {H_c^1(X- A,\mathcal{F})} & \ldots
	\arrow[from=1-1, to=1-2]
	\arrow[from=1-2, to=1-3]
	\arrow[from=1-3, to=1-4]
	\arrow[from=1-4, to=1-5]
	\arrow[from=1-5, to=1-6]
	\arrow[from=2-1, to=2-2]
	\arrow[from=2-2, to=2-3]
	\arrow[from=2-3, to=2-4]
	\arrow[from=2-4, to=2-5]
	\arrow[from=2-5, to=2-6]
	\arrow[from=1-2, to=2-2]
	\arrow[from=1-3, to=2-3]
	\arrow[from=1-4, to=2-4]
	\arrow[from=1-5, to=2-5]
\end{tikzcd}\]
In consequence, since the three cohomology groups vanish on a sufficiently large dimension $n$, we have
\begin{align}
    \label{ec cofibr} \sum_{i} (-1)^i \mathrm{tr}(f^{\ast},H_c^i(X,\mathcal{F}))&=\sum_i (-1)^i \mathrm{tr}(f^{\ast},H_c^i(X-A,\mathcal{F}))\\
    \nonumber &+\sum_i (-1)^i \mathrm{tr}(f^{\ast},H_c^i(A,\mathcal{F})).
\end{align}
Now, if we take the quotient map $\pi :X\rightarrow X\backslash A$ we can consider the direct image of $\mathcal{F}$ by $\pi$. As $A$ is definable, by \cite[Chapter 10, 2.4]{Dries} we have that the quotient map and the quotient space by $A$ can be assumed to be definable. As $A/A$ is definable (it is a single point) then $X/A-A/A$ will be definable. Now, as $(X/A-A/A, \pi_\ast \mathcal{F})$ is isomorphic to $(X-A,\mathcal{F})$, we have that there is a definable partition of the cellular complex $X/A$ consisted on the one of $X-A$ and the point $A/A$ such that, on each element of the partition, the sheaf is contant on its connected components. So we can check that $\pi_\ast\mathcal{F}$ is a $c$-constructible sheaf on $X/A$ (and so also on $X/A-A/A$ and on $A/A$). Furthermore, as $A/A$ is closed in $X/A$ we have the following sequence
\[\begin{tikzcd}[row sep=small,column sep=normal]
	0 & 0 \\
	0 & 0 \\
	{H_c^0(X/A- A/A,\pi_{\ast}\mathcal{F})} & {H_c^0(X/A-  A/A,\pi_{\ast}\mathcal{F})} \\
	{H_c^0(X/A, \pi_{\ast}\mathcal{F})} & {H_c^0(X/A, \pi_{\ast}\mathcal{F})} \\
	{H_c^0(A/A, \pi_{\ast}\mathcal{F})} & {H_c^0(A/A, \pi_{\ast}\mathcal{F})} \\
	{H_c^1(X/A- A/A,\pi_{\ast}\mathcal{F})} & {H_c^1(X/A- A/A,\pi_{\ast}\mathcal{F})} \\
	{H_c^1(X/A, \pi_{\ast}\mathcal{F})} & {H_c^1(X/A, \pi_{\ast}\mathcal{F})} \\
	0 & 0 \\
	{H_c^2(X/A- A/A,\pi_{\ast}\mathcal{F})} & {H_c^2(X/A- A/A,\pi_{\ast}\mathcal{F})} \\
	{H_c^2(X/A, \pi_{\ast}\mathcal{F})} & {H_c^2(X/A,\pi_{\ast}\mathcal{F})} \\
	0 & 0 \\
	\ldots & \ldots.
	\arrow[from=1-1, to=2-1]
	\arrow[from=1-2, to=2-2]
	\arrow[from=2-1, to=3-1]
	\arrow[from=2-2, to=3-2]
	\arrow[from=3-1, to=4-1]
	\arrow[from=3-2, to=4-2]
	\arrow[from=4-2, to=5-2]
	\arrow[from=4-1, to=5-1]
	\arrow[from=5-1, to=6-1]
	\arrow[from=5-2, to=6-2]
	\arrow[from=6-1, to=6-2]
	\arrow[from=5-1, to=5-2]
	\arrow[from=4-1, to=4-2]
	\arrow[from=3-1, to=3-2]
	\arrow[from=2-1, to=2-2]
	\arrow[from=11-1, to=12-1]
	\arrow[from=11-2, to=12-2]
	\arrow[from=11-1, to=11-2]
	\arrow[from=10-2, to=11-2]
	\arrow[from=10-1, to=11-1]
	\arrow[from=10-1, to=10-2]
	\arrow[from=9-1, to=10-1]
	\arrow[from=8-2, to=9-2]
	\arrow[from=9-2, to=10-2]
	\arrow[from=9-1, to=9-2]
	\arrow[from=8-1, to=9-1]
	\arrow[from=8-1, to=8-2]
	\arrow[from=6-2, to=7-2]
	\arrow[from=7-2, to=8-2]
	\arrow[from=6-1, to=7-1]
	\arrow[from=7-1, to=8-1]
	\arrow[from=7-1, to=7-2]
\end{tikzcd}\]
From here we see that, for $n>1$, 
\begin{equation}\label{ec cofibr 2}
H^n_c(X/A-A/A,\pi_\ast\mathcal{F})\simeq H^n_c(X/A,\pi_\ast\mathcal{F}).   
\end{equation}
Also by considering the first seven rows we obtain
\begin{align*}
    & \mathrm{tr}(f^{\ast},H^0_c(X/A-A/A,\pi_\ast\mathcal{F}))-\mathrm{tr}(f^{\ast},H^1_c(X/A-A/A,\pi_\ast\mathcal{F}))\\
    &=\mathrm{tr}(f^{\ast},H^0_c(X/A,\pi_\ast\mathcal{F}))-\mathrm{tr}(f^{\ast},H^1_c(X/A,\pi_\ast\mathcal{F}))-\mathrm{tr}(f^{\ast},H_c^0(A/A,\pi_\ast\mathcal{F})).
\end{align*}
So, from here and from Equation~\ref{ec cofibr 2} if we replace terms in Equation~\ref{ec cofibr} we obtain 
\begin{align*}
    &\sum_{i} (-1)^i \mathrm{tr}(f^{\ast},H_c^i(X,\mathcal{F}))\\
    =&\sum_i (-1)^i \mathrm{tr}(f^{\ast},H_c^i(X-A,\mathcal{F}))+\sum_i (-1)^i \mathrm{tr}(f^{\ast},H_c^i(A,\mathcal{F}))\\
    =&\sum_{i>1}(-1)^i\mathrm{tr}(f^{\ast},H_c^i(X/A,\pi_\ast\mathcal{F}))+\mathrm{tr}(f^{\ast},H_c^0(X-A,\mathcal{F}))\\&-\mathrm{tr}(f^{\ast},H_c^1(X-A,\mathcal{F})) -\mathrm{tr}(f^{\ast},H_c^0(A/A,\pi_\ast\mathcal{F})) +\sum_i (-1)^i \mathrm{tr}(f^{\ast},H_c^i(A,\mathcal{F}))\\
    =&\sum_{i}(-1)^i\mathrm{tr}(f^{\ast},H_c^i(X/A,\pi_\ast\mathcal{F}))-\mathrm{tr}(f^{\ast},H_c^0(A/A,\pi_\ast\mathcal{F}))+\sum_i (-1)^i \mathrm{tr}(f^{\ast},H_c^i(A,\mathcal{F}))\\
    =&\sum_{i}(-1)^i\mathrm{tr}(f^{\ast},H_c^i(X/A,\pi_\ast\mathcal{F}))-\mathrm{dim}((\pi_\ast\mathcal{F)}_{|A/A})+\sum_i (-1)^i \mathrm{tr}(f^{\ast},H_c^i(A,\mathcal{F})).
\end{align*}

The Lefschetz number satisfies the Wedge Spheres Axiom because of Theorem~\ref{teor integral}.

    Now we must cheek that the Lefschetz number is the only map between $\mathcal{C}$ and $\mathbb{Z}$ that satisfies this two axioms.

    We proceed by induction on the dimension of $X$. If $X$ is of dimension $1$ it is due to the Wedge Spheres Axiom. Furthermore, if $X$ is a wedge product of spheres, it is also a consequence of the Wedge Spheres Axiom. In other case, let $(X,f,\mathcal{F})$ be an element of $\mathcal{C}$, with $X$ a cellular complex of dimension $n$. Then, we can consider the diagram
    \[\begin{tikzcd}
	{X^{n-1}} & X & {X/X^{n-1}} \\
	{X^{n-1}} & X & {X/X^{n-1}}
	\arrow[from=1-1, to=1-2]
	\arrow["{f'}"', from=1-1, to=2-1]
	\arrow[from=1-2, to=1-3]
	\arrow["f"', from=1-2, to=2-2]
	\arrow["{\overline{f}}"', from=1-3, to=2-3]
	\arrow[from=2-1, to=2-2]
	\arrow[from=2-2, to=2-3],
\end{tikzcd}\]
and then, due to the Cofibration Axiom, we must have 
\begin{equation*}
\lambda(X,f,\mathcal{F})=\lambda(X/X^{n-1},\overline{f},\pi_\ast\mathcal{F})+\lambda(X^{n-1},f',\mathcal{F})-r.    
\end{equation*}
Now, we also have
\begin{equation*}
    \varLambda_c(X,f,\mathcal{F})=\varLambda_c(X/X^{n-1},\overline{f},\pi_\ast\mathcal{F})+\varLambda_c(X^{n-1},f',\mathcal{F})-r.
\end{equation*}
By induction, we have $\lambda(X^{n-1},f',\mathcal{F})=\varLambda_c(X^{n-1},f',\mathcal{F})$. In addition, as $X/X^{n-1}$ is definable and homeomorphic to a wedge product of spheres by a homeomorphism that sends cells into cells, so, by the Wedge Spheres Axiom, we have
\begin{equation*}
    \lambda(X/X^{n-1},\overline{f},\pi_\ast\mathcal{F})=\varLambda_c(X/X^{n-1},\overline{f},\pi_\ast\mathcal{F}).
\end{equation*}
Then we obtain $\lambda(X,f,\mathcal{F})=\varLambda_c(X;f,\mathcal{F})$.
\end{proof}

 \end{document}